\theoremstyle{plain}
\newtheorem{theorem}{Theorem}[section]
\newtheorem{lemma}[theorem]{Lemma}
\newtheorem{proposition}[theorem]{Proposition}
\newtheorem{corollary}[theorem]{Corollary}
\theoremstyle{definition}
\newtheorem{example}[theorem]{Example}
\theoremstyle{remark}
\newtheorem{remark}[theorem]{Remark}
\newcommand{\Ind}{\big\uparrow}
\newcommand{\F}[1]{\mathbb{F}_{#1}}
\newcommand{\sym}[1]{\mathfrak{S}_{#1}}
\DeclareMathOperator{\Sym}{Sym}
\DeclareMathOperator{\id}{id}
\DeclareMathOperator{\Com}{Com}
\DeclareMathOperator{\m}{\nabla}
\DeclareMathOperator{\mul}{\eta}
\newcommand{\List}[2]{#1_1,#1_2,\dots ,#1_{#2}}
\title{Symmetric powers of $S^{(n-1,1)}$ and $D^{(n-1,1)}$}
\author{Pavel Turek \and Jialin Wang}
\thanks{p.turek@bham.ac.uk, School of Mathematics, Watson Building, University of Birmingham, Edgbaston, Birmingham B15 2TT, United Kingdom}
\thanks{jialin.wang@city.ac.uk, School of Science \& Technology, Department of Mathematics, City St George’s,
University of London, Northampton Square, London EC1V 0HB, United Kingdom}
\subjclass[2020]{Primary: 20C30, Secondary: 13A50, 20C20}
\date{\today}
\begin{document}	 
\begin{abstract}
Let $p$ be a prime and $n\geq 2$ be a positive integer. We establish new formulae for the decompositions of the first $p-1$ symmetric powers of the Specht module $S^{(n-1,1)}$ and the irreducible module $D^{(n-1,1)}$ in characteristic $p$ as direct sums of Young permutation modules. As an application of the formulae, we show that these symmetric powers have Specht filtration and find the vertices of their indecomposable summands. Our main tool, constructed in this paper, is a lift of a splitting map of a short exact sequence of certain symmetric powers to a splitting map of a short exact sequence of higher symmetric powers. This is a general construction, which can be applied to a broader family of modules. 
\end{abstract}
\maketitle 
	
\thispagestyle{empty}	

\section{Introduction}\label{se:intro}
Let $K$ be a field of prime characteristic $p$, $G$ be a finite group, and $V$ be a $KG$-module. The symmetric powers of $V$ are again $KG$-modules, which can be viewed as quotients of the tensor powers of $V$. Moreover, for $1\leq r<p$, $\Sym^r V$ is a direct summand of $V^{\otimes r}$. In this paper, we focus on symmetric powers of those $KG$-modules $V$ which contain the trivial module, denoted by $K$, as a quotient or a submodule. For such $V$ and any integer $r\geq 1$, there is a surjective map $\partial_r: \Sym^r V \to \Sym^{r-1} V$ and an injective map $\mul_{r-1}: \Sym^{r-1} V \to \Sym^r V$, respectively (see \Cref{ex:trivial} and equations \eqref{eq:Sursequence} and \eqref{eq:Injsequence}, respectively). Our first main result is as follows.

\begin{theorem}\label{th:split}
Let $K$ be a field of prime characteristic $p$, $V$ be a $KG$-module and $r$ be a positive integer which is not congruent to $0$ or $-1$ modulo $p$.
\begin{enumerate}[label=\textnormal{(\roman*)}]
    \item Suppose that there is a surjective map $\varepsilon: V\to K$ of $KG$-modules. If the map $\partial_r: \Sym^r V \to \Sym^{r-1} V$ splits, then so does the map $\partial_{r+1}: \Sym^{r+1} V \to \Sym^r V$.
    
    \item Suppose that there is an injective map $\iota: K\to V$ of $KG$-modules. If the map $\mul_{r-1}: \Sym^{r-1} V \to \Sym^r V$ splits, then so does the map $\mul_r: \Sym^r V \to \Sym^{r+1} V$.
\end{enumerate}
\end{theorem}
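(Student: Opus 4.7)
My plan for part (i) is to construct the desired splitting $\sigma_{r+1}$ as an explicit $G$-equivariant correction of a natural candidate built from $\sigma_r$ and the bialgebra structure on $\Sym V$. I would bundle the maps $\partial_k$ into a single degree $-1$ derivation $\partial\colon \Sym V \to \Sym V$ extending $\varepsilon\colon V\to K$, write $\Delta_{a,b}\colon \Sym^{a+b} V \to \Sym^a V \otimes \Sym^b V$ for the canonical comultiplication sending $v_1 \cdots v_{a+b}$ to $\sum_{|I|=a} \bigl(\prod_{i\in I} v_i\bigr) \otimes \bigl(\prod_{j\notin I} v_j\bigr)$, and $\mu$ for the multiplication of $\Sym V$. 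The candidate
\[
\widetilde\sigma \;:=\; \mu \circ (\id_V \otimes \sigma_r) \circ \Delta_{1,r-1}\colon \Sym^r V \to \Sym^{r+1} V
\]
is $G$-equivariant because each of its three constituents is. The heart of the argument is the operator identity
\[
\partial_{r+1} \circ \widetilde\sigma \;=\; r \cdot \id_{\Sym^r V} \;+\; \sigma_r \circ \partial_r,
\]
which I would verify by a direct calculation on a product $z_1 \cdots z_r$: the image $\sum_i z_i \cdot \sigma_r(z_1 \cdots \widehat{z_i} \cdots z_r)$ is sent by $\partial$, using the Leibniz rule together with $\partial \sigma_r = \id$, to $\sum_i \varepsilon(z_i)\, \sigma_r(z_1 \cdots \widehat{z_i} \cdots z_r) + \sum_i z_i\, (z_1 \cdots \widehat{z_i} \cdots z_r) = \sigma_r(\partial_r z) + r\, z$.

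Once this identity is in place, the rest is formal manipulation with the scalars $r$ and $r+1$, which are invertible in $K$ by the hypothesis $r \not\equiv 0, -1 \pmod p$. Applying the identity to $\sigma_r \partial_r u$ in place of $u$ and using $\partial_r \sigma_r = \id$ gives $\partial_{r+1}(\widetilde\sigma \circ \sigma_r \circ \partial_r)(u) = (r+1)\, \sigma_r \partial_r u$. Therefore the $G$-equivariant map
\[
\sigma_{r+1} \;:=\; \tfrac{1}{r}\, \widetilde\sigma \circ \bigl(\id_{\Sym^r V} - \tfrac{1}{r+1}\, \sigma_r \circ \partial_r\bigr)
\]
satisfies $\partial_{r+1}\sigma_{r+1} = \id + \tfrac{1}{r}\sigma_r\partial_r - \tfrac{1}{r}\sigma_r\partial_r = \id$, as required.

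Part (ii) runs by the dual construction. With $w := \iota(1) \in V^G$, the map $\mul_s$ is multiplication by the $G$-invariant element $w$, and hence $G$-equivariant. For a given retraction $\pi_{r-1}$ of $\mul_{r-1}$, I would set
\[
\widetilde\pi \;:=\; \mu \circ (\id_V \otimes \pi_{r-1}) \circ \Delta_{1,r}\colon \Sym^{r+1} V \to \Sym^r V,
\]
and verify the mirror identity $\widetilde\pi \circ \mul_r = r \cdot \id_{\Sym^r V} + \mul_{r-1} \circ \pi_{r-1}$ by the parallel computation, now using $\pi_{r-1}\mul_{r-1} = \id$ in place of $\partial_r \sigma_r = \id$. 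The same algebraic trick then produces the retraction $\pi_r := \tfrac{1}{r}\bigl(\id_{\Sym^r V} - \tfrac{1}{r+1}\, \mul_{r-1} \circ \pi_{r-1}\bigr) \circ \widetilde\pi$ of $\mul_r$. The main obstacle in both parts is the verification of these two operator identities; once they are established, the hypothesis on $r$ modulo $p$ enters only through the invertibility of $r$ and $r+1$ in $K$.
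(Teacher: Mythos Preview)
Your proposal is correct and is essentially the paper's own argument: your maps $\widetilde\sigma$ and $\widetilde\pi$ are precisely the paper's ``degree-less lifts'' $\Psi(\sigma_r)$ and $\Psi(\pi_{r-1})$ restricted to the relevant degree, your operator identities are the $d=r$ cases of the paper's commutator formula $[\partial,\Psi(\phi)]|_{\Sym^d V}=\binom{d}{r-1}\cdot\id$ (Lemma~2.5), and your corrected splittings $\sigma_{r+1}$ and $\pi_r$ coincide with the paper's maps $\theta$. The only difference is packaging: the paper proves the commutator identity for all degrees $d$ at once via Example~2.2, whereas you verify the single instance you need directly from the Leibniz rule; both routes are equally short here.
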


\Cref{th:split} is easily applicable in inductive arguments. Indeed, if we show that $\partial_r$ splits for $r=r_0$ not divisible by $p$, then a simple induction on $r$ with conjunction with \Cref{th:split}(i) shows that $\partial_r$ splits for \emph{all} $r_0\leq r\leq (k+1)p-1$ where $kp<r_0\leq (k+1)p-1$. \Cref{th:split}(ii) provides an analogous result for maps $\mul_r$.

It is this consequence of \Cref{th:split} that we apply to permutation modules and to the Specht module $S^{(n-1,1)}$ of the symmetric group $\sym{n}$ for $p\mid n$. For permutation modules, as shown in \Cref{le:permutation}, one can take $r_0=2$ to obtain the following striking result.

\begin{corollary}\label{co:permutation}
    Let $K$ be a field of prime characteristic $p$ and $V$ be a permutation $KG$-module. Then for any $2\leq r\leq p-1$ the surjective map $\partial_r: \Sym^r V \to \Sym^{r-1} V$ splits. In particular, $\Sym^{r-1} V$ is a summand of $\Sym^r V$.
\end{corollary}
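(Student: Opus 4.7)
The plan is to use Theorem \ref{th:split}(i) as the engine for an induction on $r$, with Lemma \ref{le:permutation} (already invoked in the paragraph before the corollary) supplying the base case $r_0=2$. No further structural input about $V$ is needed beyond what makes it a permutation module.

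First I would check that the hypothesis of Theorem \ref{th:split}(i) is automatic here. Since $V$ is a permutation module, I can fix a $G$-invariant basis $\{v_i\}_{i\in I}$ of $V$, and then the augmentation $\varepsilon\colon V\to K$ defined by $v_i\mapsto 1$ for every $i\in I$ is a well-defined surjective $KG$-module homomorphism. Thus Theorem \ref{th:split}(i) can be applied to $V$ at every $r$ in the admissible range.

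Next I would execute the induction. The base case $r=2$, namely that $\partial_2\colon\Sym^2 V\to V$ splits, is exactly the content of Lemma \ref{le:permutation}. For the inductive step, suppose $\partial_r$ splits for some $r$ with $2\leq r\leq p-2$. Then $1\leq r\leq p-2$ forces $r\not\equiv 0$ and $r\not\equiv -1\pmod{p}$, so Theorem \ref{th:split}(i) promotes the splitting to one of $\partial_{r+1}$. Iterating from $r=2$ up to $r=p-2$ yields splittings of $\partial_r$ for every $2\leq r\leq p-1$. The ``in particular'' assertion is then immediate: if $s\colon\Sym^{r-1}V\to\Sym^rV$ is a section of $\partial_r$, then $\Sym^rV=\ker(\partial_r)\oplus s(\Sym^{r-1}V)$ and $s(\Sym^{r-1}V)\cong\Sym^{r-1}V$ is a direct $KG$-summand.

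The argument above is essentially a free ride on the two cited inputs, so the only genuine obstacles sit in those inputs rather than in the corollary itself: Lemma \ref{le:permutation} (which presumably requires an explicit construction of a splitting of $\partial_2$ exploiting the permutation basis and the invertibility of $2$ in $K$), and Theorem \ref{th:split}(i) (the general lifting statement). The hypothesis $r\leq p-1$ enters only so that $r$ stays in the range where Theorem \ref{th:split}(i) is applicable at every step of the induction.
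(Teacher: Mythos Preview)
Your proposal is correct and follows exactly the same route as the paper: induction on $r$ with Lemma~\ref{le:permutation} supplying the base case $r=2$ and Theorem~\ref{th:split}(i) providing the inductive step, noting that $r-1\not\equiv 0,-1\pmod p$ for $3\leq r\leq p-1$. The paper's version is terser, omitting the verification that a permutation module admits the surjection $\varepsilon$ and the unpacking of the ``in particular'' clause, but the argument is identical.
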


We use \Cref{th:split} applied with $V=S^{(n-1,1)}$ (when $p\mid n$) and $V=M^{(n-1,1)}$, the natural permutation module of $\sym{n}$, to prove formulae for symmetric powers $\Sym^r$ of $S^{(n-1,1)}$ (for $2\leq r\leq p-1$) and its simple head $D^{(n-1,1)}$ (for $3\leq r\leq p-1$) in the representation ring of $K\sym{n}$ (see \Cref{pr:ring}). Consequently, we deduce that these symmetric powers decompose as direct sums of indecomposable Young modules; in particular, they have Specht filtration as stated in \Cref{th:SDSpecht}. As further applications of the formulae, we show positivity of several $p$-Kostka numbers in \Cref{co:Kostka} and find vertices of the indecomposable summands of the symmetric powers in question as stated below.

\begin{theorem}\label{th:SDvertex}
    Suppose that $K$ is a field of characteristic $p$ and $n\geq 3$ is an integer divisible by $p$. The indecomposable summands of the symmetric powers $\Sym^r S^{(n-1,1)}$ with $2\leq r\leq p-1$ and of the symmetric powers $\Sym^r D^{(n-1,1)}$ with $3\leq r\leq p-1$ have a vertex given by a Sylow $p$-subgroup of $\sym{n-p}$ or $\sym{n-2p}$. 
\end{theorem}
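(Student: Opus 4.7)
The strategy is to combine the Young-module decomposition of each symmetric power (already established earlier in the paper; cf.\ the statement immediately preceding) with a direct case analysis of the vertex of each Young summand.

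The first step is to extract, from the explicit decomposition formulae (obtained via iterative application of \Cref{th:split} and recorded in the paper's representation-ring identities), the complete list of partitions $\lambda\vdash n$ such that $Y^\lambda$ is an indecomposable summand of $\Sym^r S^{(n-1,1)}$ or $\Sym^r D^{(n-1,1)}$. Because $r<p$ and both input modules differ from the trivial module only in a small number of boxes below the first row, the list of possible $\lambda$ is short and each has the sum of its rows beyond the first bounded by roughly $2(p-1)$.

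The second step is to determine the vertex of each $Y^\lambda$ by means of the standard classification of vertices of Young modules in terms of the $p$-adic expansions of the parts (Grabmeier). The key arithmetic input is the identity $v_p((n-k)!)=v_p((n-p)!)$ for $0\le k\le p-1$ and $v_p((n-k)!)=v_p((n-2p)!)$ for $p\le k\le 2p-1$, both valid because $p\mid n$ and because in each length-$p$ interval there is at most one integer divisible by $p$. Consequently a Sylow $p$-subgroup of the Young subgroup $\sym\lambda$ is abstractly isomorphic to a Sylow of $\sym{n-p}$ or of $\sym{n-2p}$, according to which of the two intervals the non-first-row mass of $\lambda$ falls in; Grabmeier's theorem then identifies the vertex of $Y^\lambda$ itself with this Sylow.

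The main obstacle is Step~1: correctly identifying, from the explicit formulae, which $\lambda$ fall into each of the two vertex regimes. For $\Sym^r S^{(n-1,1)}$ the picture is clean, but for $\Sym^r D^{(n-1,1)}$ the interaction with the short exact sequence linking $D^{(n-1,1)}$ to $S^{(n-1,1)}$ and the trivial module can push some summands into the $\sym{n-2p}$ regime, and these must be tracked carefully. With the case analysis in hand the vertex computation itself is routine.
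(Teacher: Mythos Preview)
Your high-level plan---reduce to Young modules and then invoke Grabmeier's vertex theorem---matches the paper's, but the execution of Step~2 has a genuine gap.

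You assert that Grabmeier's theorem identifies the vertex of $Y^\mu$ with a Sylow $p$-subgroup of the Young subgroup $\sym_\mu$. This is false. Grabmeier's result (\Cref{th:Yvertex}) gives the vertex of $Y^\mu$ as a Sylow $p$-subgroup of $\sym_\rho$, where $\rho$ is built from the $p$-adic expansion $(\mu(0),\mu(1),\dots)$ of the \emph{partition} $\mu$; this need not agree with a Sylow of $\sym_\mu$. Concretely, take $p=5$, $n=10$, $\mu=(6,4)$ (which is in the relevant range: $M^{(6,4)}$ occurs with positive coefficient in the formula for $\Sym^4 D^{(9,1)}$). Here $n-\mu_1=4<p$, so your $v_p$ heuristic places the Sylow of $\sym_\mu\cong\sym_6\times\sym_4$ in the ``$\sym_{n-p}$'' regime (order~$5$); but $(6,4)$ is $5$-restricted, so $Y^{(6,4)}$ is projective and its vertex is trivial---matching $\sym_{n-2p}=\sym_0$, not $\sym_{n-p}$.

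The paper's \Cref{le:p-adic} handles this correctly: for $n-p<\mu_1<n$ the dichotomy is governed by whether $\mu_1-\mu_2\geq n-p$ (vertex a Sylow of $\sym_{n-p}$) or not (vertex a Sylow of $\sym_{n-2p}$), reflecting whether $|\mu(0)|=p$ or $|\mu(0)|=2p$. Since \emph{every} relevant $\mu$ in fact satisfies $n-\mu_1<p$ (immediate from \Cref{le:coefficients} and dominance, so your bound ``roughly $2(p-1)$'' overshoots), the $\sym_{n-2p}$ case never arises from $n-\mu_1$ landing in $[p,2p)$; it arises only through the $p$-adic mechanism just described.

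Two further points. First, you need not---and without unknown $p$-Kostka numbers cannot---extract the complete list of Young summands; the paper only bounds which $\mu$ can label one. Second, you must separately exclude $\mu=(n)$: the paper does this by observing that each contributing $M^\lambda$ has dimension divisible by $p$, hence has no trivial summand $Y^{(n)}$.
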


Observe that the Sylow $p$-subgroups of $\sym{n-p}$ and $\sym{n-2p}$ are \emph{properly} contained in the Sylow $p$-subgroups of $\sym{n}$. Thus, for $r$ as in \Cref{th:SDvertex}, the $r$th symmetric powers of $S^{(n-1,1)}$ and $D^{(n-1,1)}$ are summands of modules induced from proper subgroups of a Sylow $p$-subgroup of $\sym{n}$. This is a surprising result since the same does not hold for the modules $S^{(n-1,1)}$ and $D^{(n-1,1)}$ themselves, as their vertices are the Sylow $p$-subgroups of $\sym{n}$.

\subsection*{Background}

The symmetric powers of $KG$-modules are popular objects of study in invariant theory, due to their connections to polynomial invariants, which, in the language of representation theory, are the elements of the largest subspace of symmetric powers isomorphic to direct sums of copies of the trivial $KG$-module. We direct the reader to \cite{BensonInvariants93} for a detailed exposition of the many known results regarding the polynomial invariants in characteristic $0$.

For a positive characteristic, results about symmetric powers and polynomial invariants applicable to all finite groups are rare. We mention the finiteness result of Karagueuzian and Symonds \cite{DP}, which asserts that for any finite group $G$ and finite field $K$, there are only finitely many non-isomorphic indecomposable summands of symmetric powers of a fixed $KG$-module $V$. We encourage the reader to see the background section of \cite{DP} for detailed references to results about symmetric powers of particular groups in positive characteristic; here we provide only a summary.

Almkvist and Fossum \cite{AlmkvistFossumCyclic78} studied symmetric and exterior powers of modules of a cyclic group of order $p$ in characteristic $p$. Almkvist later applied the results of \cite{AlmkvistFossumCyclic78} to study the number of indecomposable summands of these symmetric powers in \cite{AlmkvistComponents78, AlmkvistReciprocity81}. The work of Almkvist and Fossum was revisited and generalised to groups with Sylow $p$-subgroups of order $p$ by Hughes and Kemper \cite{HughesKemperCyclic00, HughesKemperSylow01}. A careful study of symmetric and exterior powers of the indecomposable modules of SL$_2(\F{p})$ in defining characteristic appears in Kouwenhoven \cite{KouwenhovenLambda90}. An earlier work in a similar direction by Glover \cite{GloverRepresentations78} concerns module structures of symmetric powers of the natural modules of groups SL$_2(\F{p})$, GL$_2(\F{p})$ and the semigroup Mat$_2(\F{p})$. 

The symmetric powers are an example of (polynomial) Schur functors, certain endofunctors of the module categories of groups, which can be used to construct all polynomial modules of general linear groups. A detailed study of the decompositions into indecomposable summands of modules obtained by applying Schur functors to the modules of SL$_2(K)$ in characteristic $0$ has been done by Paget and Wildon \cite{PagetWildonPlethysms21}. For positive characteristic $p$, formulae for computing such decompositions for `small' Schur functors were found by the first author \cite[Theorem~5.3]{TurekStablePlethysms24} following the strategy used in \cite[Section~2.9]{BensonBundles17} to find analogous results for the cyclic group of order $p$.

Our new results about the vertices of the indecomposable summands of the symmetric powers of $S^{(n-1,1)}$ and $D^{(n-1,1)}$ accompany the existing results about the vertices of the exterior powers of the same modules. These exterior powers coincide with modules of the form $S^{(n-k,1^k)}$ and $D^{(n-k, 1^k)}$, vertices of which were initially studied by Murphy and Peel in 1984 \cite{MurphyPeelVertices84}. Since then, the vertices of all the modules $D^{(n-k, 1^k)}$ and most of the modules $S^{(n-k,1^k)}$ were found (see \cite{WildonVertices03, DanzVertices07, MullerZimmermannVertices07, WildonVerticesBlocks10,DanzGiannelliVertices15,GiannelliKayJinWildonVertex16}). Vertices of different modules of symmetric groups, the indecomposable summands of the Foulkes modules, are found in \cite{GiannelliWildonFoulkesandDecomposition15, GiannelliDecomposition15}, and are used to prove new results about the decomposition numbers of symmetric groups.

\subsection*{Outline}

In \Cref{S:sympower} we recall the definition and the structure of symmetric algebras, prove \Cref{th:split} and deduce \Cref{co:permutation}. In Sections~\ref{S:application} and \ref{S:kostka}, we then use \Cref{th:split} to study short exact sequences of modules of the symmetric group $\sym{n}$. In particular, we find formulae for computing symmetric powers of $S^{(n-1,1)}$ and $D^{(n-1,1)}$ in the representation ring of $K\sym{n}$ and deduce results about Specht filtration, vertices (in \Cref{S:application}), and $p$-Kostka numbers (in \Cref{S:kostka}).

\section{Endomorphisms of symmetric powers}\label{S:sympower}
Let $K$ be a field of characteristic $p$, $G$ be a group, and $V$ be a finite-dimensional $KG$-module. We write $\Sym V$ for the graded symmetric $K$-algebra $\bigoplus_{i\geq 0} \Sym^i V$. One can think of $\Sym V$ as a polynomial $K$-algebra: if $\List{x}{t}$ is a basis of $V$, then $\Sym V\cong K[\List{x}{t}]$. 

The symmetric algebra $\Sym V$ has a natural structure of a $KG$-module and a structure of a Hopf algebra. Moreover, the multiplication and comultiplication of this Hopf algebra, which we denote by $\m$ and $\Delta$, respectively, are maps of $KG$-modules. For a non-negative integer $a$, we further write $\Delta_a$ for the composition of the comultiplication followed by the projection to $\Sym^a V \otimes \Sym V$. Thus, $\Delta_a$ restricted to $\Sym^d V$ is the zero map if $d<a$, and its image lies in $\Sym^a V \otimes \Sym^{d-a} V$ if $d\geq a$.

We describe these maps using the identification $\Sym V\cong K[\List{x}{t}]$. To state this description, we need further notation. A \textit{composition of $d$ of length $t$} is a sequence $\alpha = (\List{\alpha}{t})$ of $t$ non-negative integers which add up to $d$. We write $\alpha!$ for $\alpha_1!\alpha_2!\cdots\alpha_t!$, $x^{\alpha}$ for the monomial $x_1^{\alpha_1}x_2^{\alpha_2}\cdots x_t^{\alpha_t}$ in $\Sym V$ and $\partial^{\alpha}$ for the differential operator $\partial^d/(\partial x_1^{\alpha_1}\partial x_2^{\alpha_2}\cdots \partial x_t^{\alpha_t})$ of $\Sym V$. Finally, write $\Com_t(d)$ for the set of compositions of $d$ of length $t$. We use the term \textit{composition of length $t$} to refer to compositions of some non-negative integer $d$ of length $t$.

For compositions $\alpha$ and $\beta$ of length $t$ we denote by $\alpha\pm\beta$ the component-wise sum (or difference) of $\alpha$ and $\beta$. Note that $\alpha+\beta$ is always a composition of length $t$, however, $\alpha-\beta$ may not be a composition of length $t$. The multiplication $\m$ then sends $x^{\alpha}\otimes x^{\beta}$ to $x^{\alpha + \beta}$. Writing $\alpha \leq \beta$ when $\alpha_i\leq \beta_i$ for all $i\leq t$, the map $\Delta_a$ sends $x^{\beta}$ to $\sum_{\substack{\alpha\in \Com_t(a)\\ \alpha\leq \beta}} \binom{\beta_1}{\alpha_1}\binom{\beta_2}{\alpha_2}\cdots \binom{\beta_t}{\alpha_t}x^{\alpha}\otimes x^{\beta-\alpha}$.

The following construction is the main tool used in this paper. Given a homomorphism $\phi:\Sym^a V\to \Sym^b V$ we define its \textit{degree-less lift} $\Psi(\phi)$ as the endomorphism of the $KG$-module $\Sym V$ which acts on the submodule $\Sym^d V$ as zero if $d<a$, and as the following composition
\[
\Sym^d V \xrightarrow{\Delta_a} \Sym^a V\otimes \Sym^{d-a} V \xrightarrow{\phi\otimes \id} \Sym^b V\otimes \Sym^{d-a} V \xrightarrow{\m} \Sym^{d+b-a} V
\]
if $d\geq a$. From the definition, it is clear that $\Psi(\phi)\vert _{\Sym^a V} = \phi$.

\begin{example}\label{ex:identity}
For $\phi=\id : \Sym^a V \to \Sym^a V$, we get that $\Psi(\phi) = \m\circ \Delta_a$. Thus, $\Psi(\phi)$ acts on $\Sym^d A$ as a scalar multiplication by $\binom{d}{a}$: indeed $\Psi(\phi)$ sends $x^{\beta}\in \Sym^d V$ to $\sum_{\substack{\alpha\in \Com_t(a)\\ \alpha\leq \beta}} \binom{\beta_1}{\alpha_1}\binom{\beta_2}{\alpha_2}\cdots \binom{\beta_t}{\alpha_t}x^{\beta} = \binom{d}{a}x^{\beta}$, where the equality follows from the argument that choosing $a$ objects out of $d$ objects is the same as choosing $\alpha_1$ objects from the first $\beta_1$ objects, $\alpha_2$ objects from the next $\beta_2$ objects and so on, for some $\alpha\in \Com_t(a)$ such that $\alpha\leq \beta$.
\end{example}

The isomorphism $\Sym V\cong K[\List{x}{t}]$ provides us with a practical description of the degree-less lift.

\begin{lemma}\label{le:differential form}
Let $\phi: \Sym^a V\to \Sym^b V$ be a homomorphism of $KG$-modules. After the identification $\Sym V\cong K[\List{x}{t}]$, we obtain that $\Psi(\phi) = \sum_{\alpha\in \Com_t(a)} \phi(x^{\alpha}) \partial^{\alpha}/\alpha!$.
\end{lemma}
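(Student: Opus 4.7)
The plan is a direct verification on the basis of monomials: both the explicit operator $\sum_{\alpha\in\Com_t(a)} \phi(x^{\alpha})\partial^{\alpha}/\alpha!$ and the degree-less lift $\Psi(\phi)$ are $K$-linear endomorphisms of $\Sym V$, so it suffices to check that they agree on each basis element $x^{\beta}\in\Sym^d V$. I would split into the cases $d<a$ and $d\geq a$.

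For $d<a$, the left-hand side vanishes by the very definition of $\Psi(\phi)$. On the right-hand side, each $\alpha\in\Com_t(a)$ satisfies $|\alpha|=a>d=|\beta|$, so $\alpha\not\leq\beta$ componentwise, hence $\partial^{\alpha}(x^{\beta})=0$. For $d\geq a$, I would just unwind the three-step composition defining $\Psi(\phi)$ using the explicit formulae recalled in the paragraph before the lemma. Namely, $\Delta_a(x^{\beta})=\sum_{\alpha\in\Com_t(a),\,\alpha\leq\beta}\prod_{i=1}^t\binom{\beta_i}{\alpha_i}\,x^{\alpha}\otimes x^{\beta-\alpha}$; applying $\phi\otimes\id$ and then $\m$ (which multiplies the two tensor factors in $\Sym V$) yields
\[
\Psi(\phi)(x^{\beta})=\sum_{\substack{\alpha\in\Com_t(a)\\ \alpha\leq\beta}}\prod_{i=1}^t\binom{\beta_i}{\alpha_i}\,\phi(x^{\alpha})\,x^{\beta-\alpha}.
\]

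To conclude, I would compare this with the action of the proposed operator on $x^{\beta}$. A standard calculation gives $\partial^{\alpha}(x^{\beta})=\prod_i \beta_i(\beta_i-1)\cdots(\beta_i-\alpha_i+1)\,x^{\beta-\alpha}=\beta!/(\beta-\alpha)!\,x^{\beta-\alpha}$ when $\alpha\leq\beta$ (and $0$ otherwise), so $\partial^{\alpha}(x^{\beta})/\alpha!=\prod_i\binom{\beta_i}{\alpha_i}\,x^{\beta-\alpha}$. Summing over $\alpha\in\Com_t(a)$, the terms with $\alpha\not\leq\beta$ drop out and the remaining sum matches the expression for $\Psi(\phi)(x^{\beta})$ displayed above, completing the verification.

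There is no real obstacle here; the only thing to be careful about is the bookkeeping of the binomial coefficients from $\Delta_a$ versus the falling factorials from $\partial^{\alpha}$, and the mild observation (used in the $d<a$ case) that the condition $\alpha\leq\beta$ is automatically violated when $|\alpha|>|\beta|$. Since $K$ has characteristic $p$, I would also briefly note that the factorials $\alpha!$ appearing on the right cause no trouble because they cancel against the falling factorials produced by $\partial^{\alpha}$ to yield integer binomial coefficients, which are well defined in $K$.
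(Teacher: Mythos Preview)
Your proof is correct and follows essentially the same route as the paper's: both verify the identity on monomials $x^{\beta}$, treating the cases $d<a$ and $d\geq a$ separately, and in the latter case unwind $\Psi(\phi)$ via the explicit formula for $\Delta_a$ to obtain the sum of binomial-weighted terms, which is then matched against $\partial^{\alpha}/\alpha!$ applied to $x^{\beta}$. The only minor difference is that the paper sidesteps the characteristic-$p$ issue by \emph{defining} $\partial^{\alpha}/\alpha!$ directly as the binomial operator (see the remark immediately following the lemma), whereas you compute $\partial^{\alpha}$ first and then observe that the division by $\alpha!$ is harmless because the result is an integer binomial coefficient; both are fine.
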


\begin{remark}\label{re:welldefined}
If $\alpha!=0$ in $K$, we interpret the map $\partial^{\alpha}/\alpha!$ as the map which sends monomial $x^{\beta}$ to $\binom{\beta_1}{\alpha_1}\binom{\beta_2}{\alpha_2}\cdots \binom{\beta_t}{\alpha_t} x^{\beta-\alpha}$ if $\beta\geq\alpha$ and to zero otherwise. Note that $\partial^{\alpha}/\alpha!$ has the same effect even if $\alpha! \neq 0$. 
\end{remark}

\begin{proof}
Both maps, $\Psi(\phi)$ and $\sum_{\alpha\in \Com_t(a)} \phi(x^{\alpha}) \partial^{\alpha}/\alpha!$, vanish on polynomials of degree less than $a$. From the definition of $\Psi(\phi)$ and the above discussion, for any $\beta\in\Com_t(d)$ with $d\geq a$, the monomial $x^{\beta}$ is sent by $\Psi(\phi)$ to
\[\sum_{\substack{\alpha\in \Com_t(a)\\ \alpha\leq \beta}} \binom{\beta_1}{\alpha_1}\binom{\beta_2}{\alpha_2}\cdots \binom{\beta_t}{\alpha_t} x^{\beta-\alpha} \phi(x^{\alpha}).\]
This coincides with $\sum_{\alpha\in \Com_t(a)} \phi(x^{\alpha}) \partial^{\alpha} x^{\beta}/\alpha!$ (see \Cref{re:welldefined}), finishing the proof.
\end{proof}

We apply \Cref{le:differential form} to two families of modules which form the main focus of the remainder of the paper. In the examples and the rest of the paper, we write $K$ for the trivial $KG$-modules.

\begin{example}\label{ex:trivial}
Let $V$ be a $KG$-module of dimension $t$.
\begin{enumerate}[label=\textnormal{(\roman*)}]
    \item Suppose that there is a surjective map $\varepsilon: V \to K$ of $KG$-modules. Let $\List{x}{t}$ be a basis of $V$ such that $\varepsilon(x_i)=1$ for all $1\leq i\leq t$. By \Cref{le:differential form}, the endomorphism $\partial := \Psi(\varepsilon)$ of the $KG$-module $\Sym V \cong K[\List{x}{t}]$ equals the sum of differential operators $\sum_{i\leq t} \partial/\partial x_i$.

    \item Suppose that there is an injective map $\iota: K \to V$ of $KG$-modules. Let $\List{e}{t}$ be a basis of $V$ such that $\iota(1)=\sum_{i\leq t} e_i$. By \Cref{le:differential form}, the endomorphism $\mul := \Psi(\iota)$ of the $KG$-module $\Sym V \cong K[\List{e}{t}]$ equals the multiplication by $\sum_{i\leq t} e_i$.
\end{enumerate}
\end{example}

Continuing with \Cref{ex:trivial}(i), let $W$ be the kernel of $\varepsilon$. The inclusion of $W$ in $V$ gives rise to a degree-preserving injective map $\Sym W\to \Sym V$ of $KG$-modules. Using \cite[Lemma~3]{BensonKayJin14SymExterior} or direct calculations, one obtains a short exact sequence

\begin{equation}\label{eq:Sursequence}
0\to \Sym^r W\to \Sym^r V \xrightarrow{\partial_r} \Sym^{r-1} V\to 0
\end{equation}
for any $r\geq 1$, where $\partial_r$ is the map $\partial$ from \Cref{ex:trivial}(i) restricted to $\Sym^r V$.

Similarly, if $W$ is the cokernel of $\iota$ in \Cref{ex:trivial}(ii), then there is a degree-preserving surjective map $\Sym V \to \Sym W$ and by \cite[Lemma~3]{BensonKayJin14SymExterior} one obtains a short exact sequence

\begin{equation}\label{eq:Injsequence}
0\to \Sym^{r-1} V \xrightarrow{\mul_{r-1}} \Sym^r V \to \Sym^r W \to 0 
\end{equation}
for any $r\geq 1$, where $\mul_{r-1}$ is the map $\mul$ from \Cref{ex:trivial}(ii) restricted to $\Sym^{r-1} V$. While the short exact sequences \eqref{eq:Sursequence} and \eqref{eq:Injsequence} seem dual to each other, this is only the case for $r<p$. Thus, it is necessary to treat both sequences separately.

To prove our first main result, we use the following computational lemma.

\begin{lemma}\label{le:multiplication}
Let $r$ be a positive integer and $V$ a finite-dimensional $KG$-module.
\begin{enumerate}[label=\textnormal{(\roman*)}]
    \item Suppose that there exists a surjection $\varepsilon: V \to K$ and let $\partial_r$ be the restriction of $\partial=\Psi(\varepsilon)$ to $\Sym^r V$. If $\phi:\Sym^{r-1} V\to \Sym^r V$ is a map of $KG$-modules such that $\partial_r\circ \phi = \id$, then the commutator $[\partial, \Psi(\phi)] : \Sym V \to \Sym V$ acts on $\Sym^d V$ by multiplication by $\binom{d}{r-1}$.
    \item Suppose that there exists an injection $\iota: K \to V$ and let $\mul_{r-1}$ be the restriction of $\mul = \Psi(\iota)$ to $\Sym^{r-1} V$. If $\phi:\Sym^r V\to \Sym^{r-1} V$ is a map of $KG$-modules such that $\phi\circ \mul_{r-1} = \id$, then the commutator $[\Psi(\phi), \mul] : \Sym V \to \Sym V$ acts on $\Sym^d V$ by multiplication by $\binom{d}{r-1}$.
\end{enumerate}
\end{lemma}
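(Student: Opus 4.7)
The plan is to use the explicit differential-operator description of $\Psi(\phi)$ provided by \Cref{le:differential form}, reduce each commutator to $\Psi(\id_{\Sym^{r-1} V})$, and then invoke \Cref{ex:identity}, which tells us that this operator acts on $\Sym^d V$ as multiplication by $\binom{d}{r-1}$.

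For part~(i), I fix the basis $\List{x}{t}$ of $V$ from \Cref{ex:trivial}(i), so that $\partial = \sum_i \partial/\partial x_i$ is a derivation on $K[\List{x}{t}]$ which commutes with every partial-derivative operator $\partial^\alpha$. Expanding $\Psi(\phi) = \sum_{\alpha \in \Com_t(r-1)} \phi(x^\alpha)\,\partial^\alpha/\alpha!$ and applying the Leibniz rule yields
\[
\partial\circ\Psi(\phi) \;=\; \sum_{\alpha} \partial(\phi(x^\alpha))\,\partial^\alpha/\alpha! \;+\; \Psi(\phi)\circ\partial,
\]
so the commutator $[\partial,\Psi(\phi)]$ equals $\sum_{\alpha} \partial_r(\phi(x^\alpha))\,\partial^\alpha/\alpha!$. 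The splitting hypothesis $\partial_r\circ\phi = \id$ collapses $\partial_r(\phi(x^\alpha))$ to $x^\alpha$, and \Cref{le:differential form} identifies the resulting operator with $\Psi(\id_{\Sym^{r-1} V})$, giving the required scalar $\binom{d}{r-1}$ on $\Sym^d V$.

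For part~(ii), I use the basis $\List{e}{t}$ from \Cref{ex:trivial}(ii), so that $\mul$ is multiplication by $e = \sum_i e_i$. Since $\mul$ is not a derivation, the right move is instead to expand $\partial^\alpha(e\cdot f)$ via the higher Leibniz rule, giving
\[
\partial^\alpha(ef) \;=\; e\,\partial^\alpha f \;+\; \sum_i \alpha_i\,\partial^{\alpha - \epsilon_i} f,
\]
where $\epsilon_i$ denotes the $i$th unit composition. Substituting this into $\Psi(\phi)\circ\mul = \sum_{|\alpha|=r}\phi(e^\alpha)\,\partial^\alpha(ef)/\alpha!$ kills the $e\cdot\Psi(\phi)(f) = \mul\circ\Psi(\phi)(f)$ part, leaving $\sum_{|\alpha|=r}\sum_i \phi(e^\alpha)\,\alpha_i\,\partial^{\alpha-\epsilon_i}/\alpha!$. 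Reindexing via $\beta = \alpha - \epsilon_i$ converts $\alpha_i/\alpha!$ into $1/\beta!$ and rewrites the commutator as $\sum_{|\beta|=r-1} \phi\bigl(\sum_i e_i\, e^\beta\bigr)\,\partial^\beta/\beta! = \sum_{|\beta|=r-1} \phi(\mul_{r-1}(e^\beta))\,\partial^\beta/\beta!$. The splitting hypothesis $\phi\circ\mul_{r-1}=\id$ then reduces this to $\Psi(\id_{\Sym^{r-1} V})$, and \Cref{ex:identity} finishes the argument.

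The only real obstacle is the bookkeeping in part~(ii): one must perform the higher Leibniz expansion of $\partial^\alpha(ef)$ correctly and then reindex the double sum over pairs $(\alpha,i)$ with $\alpha_i \geq 1$ into a single sum over compositions $\beta$ of $r-1$, keeping careful track of how the factorial $\alpha!$ transforms. Part~(i), by contrast, is essentially immediate once one notices that $\partial$ is a derivation that commutes with every $\partial^\alpha$.
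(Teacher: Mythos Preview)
Your argument is correct and follows essentially the same route as the paper's proof: in both parts you expand $\Psi(\phi)$ via \Cref{le:differential form}, compute the commutator using the Leibniz rule (for $\partial$ in (i) and for the divided-power operators against multiplication by $e$ in (ii)), reindex to recognise the result as $\Psi(\id_{\Sym^{r-1}V})$, and conclude with \Cref{ex:identity}. The only cosmetic difference is that the paper phrases the identification $\partial(\phi(x^\alpha))=x^\alpha$ (and its analogue in (ii)) by first observing that the commutator restricts to the identity on $\Sym^{r-1}V$, whereas you invoke the splitting hypothesis directly; the content is the same.
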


\begin{proof}
Recall from \Cref{ex:trivial}(i) that $\partial = \sum_{i\leq t} \partial/\partial x_i$, where $\List{x}{t}$ is a basis of $V$ as in \Cref{ex:trivial}(i). By \Cref{le:differential form} and the product rule, we see that in (i)
\[ [\partial, \Psi(\phi)] = \sum_{\alpha\in\Com_t(r-1)}\partial(\phi(x^{\alpha})) \frac{\partial^{\alpha}}{\alpha!}. \]
For $\alpha\in\Com_t(r-1)$, let $f_{\alpha} = \partial(\phi(x^{\alpha})) = [\partial, \Psi(\phi)] (x^{\alpha})$, which is a polynomial in $K[\List{x}{t}]$ of degree $r-1$. Since $\Psi(\phi)$ vanishes on $\Sym^{r-2} V$ and acts as $\phi$ on $\Sym^{r-1} V$, from $\partial_r\circ \phi = \id$, we conclude that $[\partial, \Psi(\phi)]$ restricted to $\Sym^{r-1} V$ becomes the identity map. Thus $f_{\alpha} =  x^{\alpha}$ for any composition $\alpha\in\Com_t(r-1)$. In turn, by \Cref{le:differential form}, $[\partial, \Psi(\phi)] = \sum_{\alpha\in\Com_t(r-1)}x^{\alpha} \frac{\partial^{\alpha}}{\alpha!}$ is the degree-less lift of the identity map on $\Sym^{r-1} V$. The result now follows from \Cref{ex:identity}.

The situation in (ii) is analogous. Recall from \Cref{ex:trivial}(ii) that $\mul$ is the multiplication by $\sum_{i\leq t} e_i$, where $\List{e}{t}$ is a basis of $V$ as in \Cref{ex:trivial}(ii). One then computes
\[ [\Psi(\phi), \mul] = \sum_{\beta\in\Com_t(r)} \phi(e^{\beta}) \sum_{\substack{i\leq t\\ \beta_i\geq 1}} \beta_i \frac{\partial^{\beta - \delta^i}}{\beta!} = \sum_{\alpha\in\Com_t(r-1)} \left(\sum_{i\leq t}\phi(e_ie^{\alpha})\right) \frac{\partial^{\alpha}}{\alpha!}, \]
where $\delta^i$ is the composition of $1$ of length $t$ with $\delta_i^i = 1$.

Similarly as above, $\Psi(\phi)$ vanishes on $\Sym^{r-1} S$ and acts as $\phi$ on $\Sym^r S$, and $\phi\circ \mul_{r-1} = \id$; thus $[\Psi(\phi), \mul]$ restricted to $\Sym^{r-1} S$ is the identity. The result follows as in (i). 
\end{proof}

We are now ready to prove the main result.

\begin{proof}[Proof of \Cref{th:split}]
Let $\phi:\Sym^{r-1} V \to \Sym^r V$ be the splitting of $\partial_r$ in (i). We will show that
\[\theta = \frac{1}{r}\left( \Psi(\phi) - \frac{\Psi(\phi)^2\circ\partial}{r+1}\right),\]
restricted to $\Sym^r V$ is the desired splitting. Using \Cref{le:multiplication}(i), for $v\in \Sym^d V$ we compute

\begin{align*}
&r \partial\circ\theta(v) \\
=\;& [\partial, \Psi(\phi)](v) + \Psi(\phi)\circ \partial(v)\\
&- \frac{[\partial, \Psi(\phi)]\circ\Psi(\phi)\circ \partial(v) + \Psi(\phi)\circ[\partial, \Psi(\phi)]\circ \partial(v) + \Psi(\phi)^2\circ\partial^2 (v)}{r+1}\\
=\;& \binom{d}{r-1} v + \left( 1- \frac{\binom{d}{r-1} + \binom{d-1}{r-1}}{r+1} \right)\Psi(\phi)\circ\partial (v) - \frac{\Psi(\phi)^2\circ\partial^2 (v)}{r+1}.   
\end{align*}
In particular, if $d=r$, then the coefficient of $v$ becomes $r$, the coefficient of $\Psi(\phi)\circ\partial (v)$ becomes $0$ and $\Psi(\phi)^2\circ\partial^2 (v)=0$ as $\Psi(\phi)$ vanishes on polynomials of degree $r-2$. Thus $\partial\circ \theta(v) = v$ for any $v\in \Sym^r V$, as required.

Now suppose that $\phi:\Sym^r V \to \Sym^{r-1} V$ is a splitting of $\mul_{r-1}$ in (ii). This time, we show that
\[\theta = \frac{1}{r}\left(\Psi(\phi) - \frac{\mul\circ\Psi(\phi)^2}{r+1}\right),\]
restricted to $\Sym^{r+1} V$ is the desired splitting. Using \Cref{le:multiplication}(ii), for $v\in \Sym^d V$ we compute

\begin{align*}
&r \theta\circ\mul(v)\\ =\;& [\Psi(\phi),\mul](v) + \mul\circ\Psi(\phi)(v)\\
&- \frac{\mul\circ\Psi(\phi)\circ[\Psi(\phi),\mul](v) + \mul\circ[\Psi(\phi),\mul]\circ \Psi(\phi)(v) + \mul^2\circ\Psi(\phi)^2(v)}{r+1}\\
=\;& \binom{d}{r-1} v + \left( 1- \frac{\binom{d}{r-1} + \binom{d-1}{r-1}}{r+1} \right)\mul\circ\Psi(\phi) (v) - \frac{\mul^2\circ\Psi(\phi)^2 (v)}{r+1}.   
\end{align*}
As $\Psi(\phi)^2$ vanishes on polynomials of degree $r$, similarly to (i), we conclude that $\theta\circ\mul(v)=v$ for any $v\in \Sym^r V$, as required.
\end{proof}

We end this section by applying \Cref{th:split} to permutation modules. Recall that a $KG$-module $V$ is a \textit{permutation module} if there is a basis $\List{x}{t}$ of $V$ which is invariant under the action of $G$ on $V$. The basis $\List{x}{t}$ is referred to as a \textit{permutation basis} of $V$. Up to scalar multiplication, there is a unique surjective map $\varepsilon: V \to K$, which is given by $\varepsilon(x_i)=1$ for all $i$ and a unique injective map $\iota: K \to V$, which is given by $\iota(1) = \sum_{i\leq t} x_i$. In turn, $V=\Sym^1 V$ has $K=\Sym^0 V$ as a summand if and only if $\varepsilon\circ\iota \neq 0$, which happens precisely when $p\nmid \dim V$. 

However, for any permutation module $V$, $\Sym^r V$ has $\Sym^{r-1} V$ as a summand if $2\leq r\leq p-1$ as stated in \Cref{co:permutation}. Note that for $V\not\cong K$ the composition $\partial_r\circ \mul_{r-1}$ is not a scalar multiplication of the identity map, and thus to show this fact, a new map is needed.

\begin{lemma}\label{le:permutation}
    Let $V$ be a permutation $KG$-module with a permutation basis $\List{x}{t}$. The map $\zeta: V\to \Sym^2 V$ given by $\zeta(x_i) =x_i^2/2$ for any $i\leq t$ is a splitting of the map $\partial_2: \Sym^2 V \to V$. 
\end{lemma}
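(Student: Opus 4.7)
The plan is to verify two things: first, that the $K$-linear extension of $\zeta$ is actually a homomorphism of $KG$-modules (and not merely of $K$-vector spaces); second, that $\partial_2\circ\zeta = \id_V$. Both are short computations and neither should present a serious obstacle. The only implicit constraint to flag is that the formula involves $1/2$, which requires $p\neq 2$; this is consistent with the intended applications, since in \Cref{co:permutation} the range $2\leq r\leq p-1$ is empty when $p=2$.

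For $KG$-equivariance, I would use that by definition of a permutation basis, each $g\in G$ acts on $V$ by $g\cdot x_i = x_{\sigma_g(i)}$ for some permutation $\sigma_g$ of $\{1,\dots,t\}$, and that the induced action on $\Sym V$ is by $K$-algebra automorphisms. Hence
\[
\zeta(g\cdot x_i)=\zeta(x_{\sigma_g(i)})=\tfrac{1}{2}x_{\sigma_g(i)}^2=\tfrac{1}{2}(g\cdot x_i)^2=g\cdot \zeta(x_i),
\]
and the identity extends to all of $V$ by $K$-linearity.

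For the splitting property, I would invoke \Cref{ex:trivial}(i): since a permutation module comes with the canonical surjection $\varepsilon:V\to K$ sending each $x_i$ to $1$ (as noted in the paragraph preceding the lemma), the operator $\partial$ on $\Sym V\cong K[x_1,\dots,x_t]$ equals $\sum_{j\leq t} \partial/\partial x_j$, and $\partial_2$ is its restriction to $\Sym^2 V$. Applied to $x_i^2/2$, only the $j=i$ summand survives and contributes $x_i$, so $\partial_2(\zeta(x_i))=x_i$ for every basis vector $x_i$. By $K$-linearity, $\partial_2\circ\zeta = \id_V$, as required.
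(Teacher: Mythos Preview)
Your proof is correct and follows essentially the same approach as the paper's own proof: verify $KG$-equivariance using that $G$ permutes the basis, then check $\partial_2\circ\zeta(x_i)=x_i$ directly. Your version is simply more explicit, and your remark that $p\neq 2$ is implicitly required is a helpful observation the paper leaves unstated.
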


\begin{proof}
    Since $G$ permutes $\List{x}{t}$, the map $\zeta$ is a homomorphism of $KG$-modules. One computes that $\partial_2\circ \zeta (x_i) = x_i$ for all $i\leq t$, which finishes the proof.
\end{proof}

We immediately conclude the promised result about permutation modules.

\begin{proof}[Proof of \Cref{co:permutation}]
    We prove the statement by induction on $r$. If $r=2$, then we can use \Cref{le:permutation}. If $3\leq r\leq p-1$ and the statement holds for $r-1$, then we obtain the desired splitting from \Cref{th:split}(i) since $r-1$ is not congruent to $0$ or $-1$ modulo $p$.
\end{proof}

\section{Applications to $S^{(n-1,1)}$ and $D^{(n-1,1)}$}\label{S:application}

\subsection{Background on the symmetric group}

We recall the necessary background from the representation theory of symmetric groups for this paper. For a more detailed account of this area, see, for instance, \cite{JamesSymmetric78}. 

Let $n$ be a non-negative integer. A \textit{partition} $\lambda = (\List{\lambda}{l})$ of $n$ is a non-increasing sequence of positive integers, called \textit{parts}, which add up to $n$. The \textit{length} of $\lambda$ denoted by $\ell(\lambda)$ is the number of parts of $\lambda$, and the \textit{size} of $\lambda$ denoted by $|\lambda|$ is $n$. We use the convention that $\lambda_i = 0$ for any $i>\ell(\lambda)$. A partition is \textit{$p$-regular} if each of its entries repeats at most $(p-1)$-times. And it is \textit{$p$-restricted} if for any positive integer $i$, we have $\lambda_i-\lambda_{i+1} \leq p-1$. For partitions $\lambda$ and $\mu$ of $n$, we say that $\lambda$ \textit{dominates} $\mu$ if for all positive integers $j$ we have $\sum_{i\leq j} \lambda_i \geq \sum_{i\leq j} \mu_i$. If this is the case, we write $\lambda \unrhd \mu$.

Let $\sym{n}$ denote the symmetric group indexed by $n$ and for a partition $\lambda$ of $n$, let $\sym{\lambda}$ be its \textit{Young subgroup} $\sym{\lambda_1}\times \sym{\lambda_2}\times\dots\times \sym{\lambda_{\ell(\lambda)}}$. We fix an underlying field $K$. The \textit{Young permutation module} $M^{\lambda}$ is the permutation $K\sym{n}$-module with a permutation basis given by the left cosets of $\sym{\lambda}$ (permuted naturally by $\sym{n}$). When the underlying field $K$ has characteristic $0$, the irreducible $K\sym{n}$-modules are given by the \textit{Specht modules} $S^{\lambda}$ indexed by partitions $\lambda$ of $n$. The Specht module $S^{\lambda}$ is defined as a submodule of $M^{\lambda}$ with a characteristic free basis given by the set of $\lambda$-polytabloids. Moreover, the Specht modules are indecomposable if $K$ has positive characteristic $p>2$. Suppose now $K$ is a field of positive characteristic $p$. The irreducible $K\sym{n}$-modules are the (irreducible) heads of Specht modules labelled by $p$-regular partitions of $n$. Given a $p$-regular partition $\lambda$, we denote the head of $S^\lambda$ by $D^{\lambda}$.

The final family of $K\sym{n}$-modules we need to introduce are the indecomposable \textit{Young modules} $Y^{\lambda}$ indexed by partitions of $n$. James \cite[Theorem~3.1]{JamesTrivial83} showed that every Young permutation module $M^{\lambda}$ decomposes as a sum of Young modules $Y^{\mu}$ with $\mu\unrhd \lambda$ and that the Young module $Y^{\lambda}$ appears with multiplicity one.

Throughout we assume $n\geq 2$. We mainly consider modules labelled by the partition $(n-1,1)$. The Young permutation module $M^{(n-1,1)}$ is the natural $n$-dimensional permutation $K\sym{n}$-module. We denote its permutation basis by $\List{x}{n}$. The surjection $M^{(n-1,1)} \to K$ gives rise to a (characteristic-independent) short exact sequence
\[
0 \to S^{(n-1,1)} \to M^{(n-1,1)} \to K \to 0.
\]
In the spirit of \eqref{eq:Sursequence}, for any $r\geq 1$ we obtain a short exact sequence
\begin{equation}\label{eq:Msequence}
0\to \Sym^r S^{(n-1,1)}\to \Sym^r M^{(n-1,1)} \xrightarrow{\partial_r} \Sym^{r-1} M^{(n-1,1)}\to 0.
\end{equation}
Suppose that $n\geq 3$. The elements $e_i=x_i-x_n$ with $i\leq n-1$ of the Specht module $S^{(n-1,1)}\leq M^{(n-1,1)}$ form its basis. This Specht module is irreducible if and only if the characteristic $p$ of the underlying field $K$ does not divide $n$ (in which case $D^{(n-1,1)} = S^{(n-1,1)}$). If $p\mid n$, then there is a short exact sequence
\[
0 \to K \to S^{(n-1,1)} \to D^{(n-1,1)} \to 0,
\]
where the injection $K\to S^{(n-1,1)}$ sends $1$ to $\sum_{i\leq n-1} e_i$. As in \eqref{eq:Injsequence}, for any $r\geq 1$ we obtain a short exact sequence
\begin{equation}\label{eq:Ssequence}
0\to \Sym^{r-1} S^{(n-1,1)} \xrightarrow{\mul_{r-1}} \Sym^r S^{(n-1,1)} \to \Sym^r D^{(n-1,1)} \to 0.
\end{equation}

\subsection{Symmetric powers of $S^{(n-1,1)}$ and $D^{(n-1,1)}$}

We have already seen that \eqref{eq:Msequence} splits for suitable $r$.

\begin{lemma}\label{le:Msplits}
    Let $K$ be a field of positive characteristic $p$ and $n\geq 2$ be an integer. The short exact sequence \eqref{eq:Msequence} of $K\sym{n}$-modules splits for $2\leq r\leq p-1$.
\end{lemma}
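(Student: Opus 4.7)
The plan is to observe that this lemma is a direct instance of \Cref{co:permutation}. The natural permutation module $M^{(n-1,1)}$ has the permutation $K\sym{n}$-basis $x_1,\dots,x_n$, so it satisfies the hypothesis of \Cref{co:permutation}. Moreover, the short exact sequence \eqref{eq:Msequence} is precisely the sequence \eqref{eq:Sursequence} obtained from the surjection $\varepsilon: M^{(n-1,1)}\to K$, $x_i\mapsto 1$, whose kernel is the Specht module $S^{(n-1,1)}$.

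Thus I would simply invoke \Cref{co:permutation} with $V=M^{(n-1,1)}$ and $G=\sym{n}$ to conclude that $\partial_r:\Sym^r M^{(n-1,1)}\to \Sym^{r-1} M^{(n-1,1)}$ splits for all $2\leq r\leq p-1$. Since splittings of the quotient map in a short exact sequence correspond to splittings of the sequence itself, this finishes the proof.

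There is essentially no obstacle here; all the work has already been done in \Cref{S:sympower}. The only point worth emphasising in the write-up is the identification of \eqref{eq:Msequence} with \eqref{eq:Sursequence} for $V=M^{(n-1,1)}$, so that the reader sees why \Cref{co:permutation} applies verbatim. The lemma therefore serves primarily as a bridge, recording the base case $V=M^{(n-1,1)}$ that will be fed into subsequent arguments about $S^{(n-1,1)}$ and $D^{(n-1,1)}$.
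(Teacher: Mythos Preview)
Your proposal is correct and matches the paper's own proof exactly: the paper simply states that this is \Cref{co:permutation} for $V=M^{(n-1,1)}$. Your additional remarks about identifying \eqref{eq:Msequence} with \eqref{eq:Sursequence} are accurate elaborations of the same one-line argument.
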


\begin{proof}
    This is \Cref{co:permutation} for $V=M^{(n-1,1)}$.
\end{proof}

\begin{remark}\label{re:Msplit}
    As mentioned before \Cref{le:permutation}, the short exact sequence \eqref{eq:Msequence} with $r=1$ splits if and only if $p$ does not divide the dimension of $M^{(n-1,1)}$, that is, if $p\nmid n$. In that case $M^{(n-1,1)}\cong S^{(n-1,1)}\oplus K$. It is well-known that if $M\cong N\oplus L$, then 
    $$\Sym^rM\cong \bigoplus_{0\leq q\leq r}\Sym^qN\otimes \Sym^{r-q}L.$$
    Thus, we have 
    $$\Sym^rM^{(n-1,1)}\cong \bigoplus_{0\leq q\leq r}\Sym^qS^{(n-1,1)}\otimes \Sym^{r-q}K\cong \bigoplus_{0\leq q\leq r}\Sym^qS^{(n-1,1)}.$$
    In particular, $\Sym^rM^{(n-1,1)}\cong \Sym^rS^{(n-1,1)}\oplus \Sym^{r-1}M^{(n-1,1)}$ and \Cref{le:Msplits} follows immediately in this case.
\end{remark}

To deduce an analogous result for \eqref{eq:Ssequence}, we need an initial splitting to apply \Cref{th:split}(ii) to.

\begin{lemma}\label{le:Smap}
Suppose that $K$ is a field of odd characteristic $p$, and $n$ is a positive integer divisible by $p$. The map $\gamma:\Sym^3 S \to \Sym^2 S$ which is given by 
\begin{align*}
    e_i^3&\mapsto -\frac{1}{2}e_i \sum_{l\leq n-1} e_l, \\
    e_i^2e_j&\mapsto \frac{1}{2}(e_ie_j-e_i^2-e_j^2) -\frac{1}{4} \sum_{l\leq n-1} e_l^2, \\
    e_ie_je_k&\mapsto -\frac{1}{4} (e_i^2 +e_j^2 +e_k^2 +\sum_{l\leq n-1} e_l^2)
\end{align*}
(for pairwise distinct $i,j$ and $k$) is a $K\sym{n}$-homomorphism which satisfies $\gamma\circ \mul_2 = id$.
\end{lemma}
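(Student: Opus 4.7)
The map $\gamma$ is prescribed on a $K$-basis of $\Sym^3 S^{(n-1,1)}$ --- the monomials $e_i^3$, the $e_i^2 e_j$ with $i \neq j$, and the $e_i e_j e_k$ with pairwise distinct indices --- and since the formula in the third case is symmetric in $i, j, k$, the prescription extends uniquely by linearity to a $K$-linear map. So the real work is to verify $K\sym{n}$-equivariance and the identity $\gamma \circ \mul_2 = \id$.

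For equivariance, I would exploit that $\sym{n}$ is generated by the stabiliser $\sym{n-1}$ of $n$ together with the single transposition $\tau = (n-1, n)$. Writing $E = \sum_{l=1}^{n-1} e_l$ and $E_2 = \sum_{l=1}^{n-1} e_l^2$, the three defining formulas involve only $E$, $E_2$, and the indices appearing on the left, with matching symmetry, so $\sym{n-1}$-equivariance follows immediately. The action of $\tau$ on the Specht basis is $\tau(e_j) = e_j - e_{n-1}$ for $j < n-1$ and $\tau(e_{n-1}) = -e_{n-1}$. Next, I would check that the hypothesis $p \mid n$ yields $\tau(E) = E$ --- so $E$ is in fact $\sym{n}$-invariant --- while $\tau(E_2) = E_2 - 2 e_{n-1} E$. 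Using these transformation rules, the remaining task is to verify $\gamma \circ \tau = \tau \circ \gamma$ on each of the three monomial types, organised by how many of the indices coincide with $n-1$.

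For the splitting identity, the plan is to evaluate $\gamma \circ \mul_2$ on the basis $\{e_i^2\} \cup \{e_i e_j : i < j\}$ of $\Sym^2 S^{(n-1,1)}$. Expanding $\mul_2(e_i^2) = E\cdot e_i^2 = e_i^3 + \sum_{l \neq i} e_i^2 e_l$ and similarly for $e_i e_j$, applying $\gamma$ term-by-term and collecting, the outcome is $e_i^2$ (respectively $e_i e_j$) plus correction terms proportional to $e_i^2$, $e_j^2$, and $E_2$ whose coefficients carry a factor of $n$ and hence vanish modulo $p$.

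I expect the main obstacle to be the equivariance check for $\gamma(e_a e_b e_c)$ under $\tau$, which branches into subcases (none, one, two, or three of $a, b, c$ equal to $n-1$) and produces several distinct bookkeeping identities; however, each reduces mechanically once the rules for $\tau(E)$ and $\tau(E_2)$ are in hand. The two hypotheses play transparent roles in this strategy: $p \mid n$ makes $E$ invariant and annihilates the stray factors of $n$ that appear in both computations, while $p$ odd ensures that the coefficients $\tfrac{1}{2}$ and $\tfrac{1}{4}$ in the formula for $\gamma$ are well-defined.
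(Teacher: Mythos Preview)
Your proposal is correct and follows essentially the same route as the paper's proof: both reduce equivariance to the single transposition $(n-1,n)$ after observing that the $\sym{n-1}$-equivariance (equivalently, equivariance under $s_1,\dots,s_{n-2}$) is immediate from symmetry, and both verify $\gamma\circ\mul_2=\id$ on the monomial basis $\{e_i^2,\,e_ie_j\}$ using that the stray terms carry a factor of $n$. Your upfront isolation of the rules $\tau(E)=E$ and $\tau(E_2)=E_2-2e_{n-1}E$ is a tidy way to organise what the paper computes inline; note only that for the triple product $e_ie_je_k$ with distinct indices at most one index can equal $n-1$, so the case split is ``none or one'' rather than ``none through three''.
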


\begin{proof}
This is verified in \Cref{se:proof}.
\end{proof}

We are now ready to apply \Cref{th:split}(ii).

\begin{corollary}\label{cor:Ssplits}
    Suppose that $K$ is a field of odd characteristic $p$ and $n$ is a positive integer divisible by $p$. The short exact sequence \eqref{eq:Ssequence} of $K\sym{n}$-modules splits for $3\leq r\leq p-1$.
\end{corollary}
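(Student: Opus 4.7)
The plan is to prove this by induction on $r$, applying \Cref{th:split}(ii) repeatedly with \Cref{le:Smap} providing the base case. This mirrors exactly how \Cref{co:permutation} was deduced from \Cref{le:permutation}, the only new ingredient being the seed splitting $\gamma$ of \Cref{le:Smap}. Note first that for $p=3$ the range $3\leq r\leq p-1$ is empty, so there is nothing to prove and we may assume $p\geq 5$ throughout.

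For the base case $r=3$, \Cref{le:Smap} furnishes a $K\sym{n}$-homomorphism $\gamma:\Sym^3 S^{(n-1,1)}\to \Sym^2 S^{(n-1,1)}$ with $\gamma\circ \mul_2=\id$, which is already a splitting of \eqref{eq:Ssequence} for $r=3$. For the inductive step, suppose that \eqref{eq:Ssequence} splits for some $r$ with $3\leq r\leq p-2$, equivalently that $\mul_{r-1}:\Sym^{r-1}S^{(n-1,1)}\to \Sym^r S^{(n-1,1)}$ admits a splitting. Applying \Cref{th:split}(ii) with this value of $r$ to the module $V=S^{(n-1,1)}$, whose trivial submodule is exhibited by the injection $1\mapsto \sum_{i\leq n-1} e_i$ used in the construction of \eqref{eq:Ssequence}, yields a splitting of $\mul_r:\Sym^r S^{(n-1,1)}\to \Sym^{r+1}S^{(n-1,1)}$. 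This is precisely a splitting of \eqref{eq:Ssequence} with $r$ replaced by $r+1$, advancing the induction.

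The only point to check is the congruence restriction in \Cref{th:split}(ii): we need $r\not\equiv 0,-1\pmod p$. Since $3\leq r\leq p-2$ gives $0<r<p-1$, neither congruence can hold, so the induction runs from $r=3$ all the way up to $r=p-1$. No obstacle is anticipated in this corollary itself: all the substantive work sits in \Cref{le:Smap}, whose verification is deferred to \Cref{se:proof}, and in the machinery of \Cref{th:split}(ii). Once these are granted, the corollary is a purely formal consequence.
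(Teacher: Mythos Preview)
Your proof is correct and follows exactly the same approach as the paper's own proof, which simply records that $p\geq 5$ and then invokes induction on $r$ using \Cref{le:Smap} for the base case and \Cref{th:split}(ii) for the step. Your version merely spells out the induction and the check of the congruence hypothesis in more detail.
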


\begin{proof}
    Note that $p$ is at least $5$ as it is a prime and $3\leq p-1$. The result follows by induction on $r$ using \Cref{le:Smap} and \Cref{th:split}.
\end{proof}

\begin{remark}\label{re:Ssplit}
    In the setting of \Cref{cor:Ssplits}, the short exact sequence \eqref{eq:Ssequence} does not split if $r=1$ or $r=2$. The case $r=1$ is well-known: $S^{(n-1,1)}$ is an indecomposable module if $p\mid n$. For $r=2$ one can use that for $n>3$, there is an isomorphism $\Sym^2 S^{(n-1,1)} \cong M^{(n-2,2)}$ which has two indecomposable summands -- $M^{(n-1,1)}$ of dimension $n$ and the other of dimension $n(n-3)/2$ (see \Cref{ex:SandD} and \Cref{S:kostka}), neither of which is isomorphic to $S^{(n-1,1)}$ which has dimension $n-1$. If $n=3$, then $\Sym^2 S^{(2,1)}\cong M^{(2,1)}$ is indecomposable. Hence \eqref{eq:Ssequence} does not split for $r=2$. 
\end{remark}

We can immediately deduce new formulae for the symmetric powers of $S^{(n-1,1)}$ and $D^{(n-1,1)}$ in the representation ring. Throughout the rest of the paper, $K$ is a field with positive characteristic $p$ and for a $K\sym{n}$-module $V$, we write $[V]$ for the corresponding element in the representation ring of $K\sym{n}$.

\begin{proposition}\label{pr:ring}
    Suppose that $n\geq 2$ is an integer. In the representation ring of $K\sym{n}$, we have 
    \[
    [\Sym^r S^{(n-1,1)}] = [\Sym^r M^{(n-1,1)}] - [\Sym^{r-1} M^{(n-1,1)}]
    \]
    for $2\leq r\leq p-1$. If moreover $p\mid n$, then 
    \[
    [\Sym^r D^{(n-1,1)}] = [\Sym^r M^{(n-1,1)}] + [\Sym^{r-2} M^{(n-1,1)}] - 2[\Sym^{r-1} M^{(n-1,1)}]
    \]
    for $3\leq r\leq p-1$. 
\end{proposition}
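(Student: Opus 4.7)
The plan is to read off the identities directly from the splittings already established in \Cref{le:Msplits} and \Cref{cor:Ssplits}, using the fact that a split short exact sequence $0\to A\to B\to C\to 0$ gives $[B]=[A]+[C]$ in the representation ring. So the proof should be almost mechanical; no further module-theoretic input is needed.

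For the first formula, I would apply \Cref{le:Msplits} to the short exact sequence \eqref{eq:Msequence}. For $2\leq r\leq p-1$ this gives an isomorphism $\Sym^r M^{(n-1,1)}\cong \Sym^r S^{(n-1,1)}\oplus \Sym^{r-1} M^{(n-1,1)}$, and hence in the representation ring
\[
[\Sym^r S^{(n-1,1)}] = [\Sym^r M^{(n-1,1)}] - [\Sym^{r-1} M^{(n-1,1)}],
\]
which is exactly the first claim.

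For the second formula, assume $p\mid n$ and $3\leq r\leq p-1$. By \Cref{cor:Ssplits} the sequence \eqref{eq:Ssequence} splits, so $\Sym^r S^{(n-1,1)}\cong \Sym^{r-1} S^{(n-1,1)}\oplus \Sym^r D^{(n-1,1)}$ and therefore
\[
[\Sym^r D^{(n-1,1)}] = [\Sym^r S^{(n-1,1)}] - [\Sym^{r-1} S^{(n-1,1)}].
\]
Since $r\geq 3$, both $r$ and $r-1$ lie in the range $\{2,\dots,p-1\}$, so I can substitute the first formula (applied once at degree $r$ and once at degree $r-1$) into the right-hand side. The two copies of $[\Sym^{r-1}M^{(n-1,1)}]$ combine to give the coefficient $-2$, and the $[\Sym^{r-2}M^{(n-1,1)}]$ term appears with coefficient $+1$, yielding the claimed identity.

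There is no real obstacle here: the content is entirely carried by \Cref{le:Msplits} and \Cref{cor:Ssplits}, and the proposition is a formal bookkeeping consequence. The only thing to check carefully is that the ranges of $r$ in the two formulae are chosen precisely so that every substitution of the first formula into the derivation of the second is legal, i.e.\ that $r-1\geq 2$ whenever one wants to use the first identity at degree $r-1$, which forces the lower bound $r\geq 3$ appearing in the statement.
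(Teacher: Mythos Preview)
Your proposal is correct and mirrors the paper's own proof essentially line for line: the paper also derives the first identity directly from \Cref{le:Msplits}, then uses \Cref{cor:Ssplits} to get $[\Sym^r D^{(n-1,1)}] = [\Sym^r S^{(n-1,1)}] - [\Sym^{r-1} S^{(n-1,1)}]$ and substitutes the first formula at degrees $r$ and $r-1$. Your explicit check that $r-1\geq 2$ so the substitution is valid is a nice addition that the paper leaves implicit.
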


\begin{proof}
    The first formula follows directly from \Cref{le:Msplits}. From \Cref{cor:Ssplits}, we have
    \[
    [\Sym^r D^{(n-1,1)}] = [\Sym^r S^{(n-1,1)}] - [\Sym^{r-1} S^{(n-1,1)}]
    \]
    for any $3\leq r\leq p-1$. The second formula now follows from the first one applied with $r$ and $r-1$.
\end{proof}

As the next step, we decompose the symmetric powers of $M^{(n-1,1)}$ as direct sums of Young permutation modules. To do so, for a partition $\lambda$ and a non-negative integer $r$, let $y_r^{\lambda}$ be the number of sequences $d=(d_0,d_1,\dots)$ of non-negative integers such that (a) the non-zero entries of $d$ are (in some order) the parts of $\lambda$, and (b) $\sum_{i\geq 0} id_i = r$.

\begin{lemma}\label{le:Mdecomposition}
    Let $r$ be a non-negative integer. There is an isomorphism $\Sym^r M^{(n-1,1)} \cong \bigoplus_{\lambda} y_r^{\lambda} M^{\lambda}$, where the sum runs over all partitions of $n$.
\end{lemma}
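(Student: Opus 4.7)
The plan is to decompose $\Sym^r M^{(n-1,1)}$ as a direct sum of transitive permutation modules, one per $\sym{n}$-orbit on a canonical permutation basis, and then to match each orbit to a Young permutation module $M^\lambda$.

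First I would observe that $\Sym^r M^{(n-1,1)}$ inherits a natural permutation basis of monomials $x^\alpha = x_1^{\alpha_1}\cdots x_n^{\alpha_n}$, indexed by $\alpha \in \Com_n(r)$. Because $\sym{n}$ acts by permuting $\List{x}{n}$, this basis is $\sym{n}$-invariant, so $\Sym^r M^{(n-1,1)}$ is a permutation $K\sym{n}$-module and splits as $\bigoplus_{\mathcal{O}} K\mathcal{O}$ over the orbits $\mathcal{O}$ of monomials.

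Next I would describe the orbits. Two monomials $x^\alpha$ and $x^\beta$ lie in the same orbit exactly when $\beta$ is obtained from $\alpha$ by permuting entries. Hence orbits are in bijection with sequences $d = (d_0, d_1, \dots)$ of non-negative integers recording the multiplicity with which each non-negative integer appears in $\alpha$; these sequences satisfy $\sum_i d_i = n$ and $\sum_i i d_i = r$. For any monomial $x^\alpha$ in the orbit corresponding to $d$, a permutation fixes $x^\alpha$ if and only if it permutes within each group of indices sharing a common exponent, so the stabiliser is conjugate to the Young subgroup $\sym{d_0}\times\sym{d_1}\times\cdots$, which equals $\sym{\lambda(d)}$ where $\lambda(d)$ is the partition of $n$ obtained by listing the non-zero entries of $d$ in non-increasing order. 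Therefore the orbit, as a transitive permutation module, is isomorphic to $M^{\lambda(d)}$.

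Finally I would collect terms: summing over all sequences $d$ and grouping by the resulting partition $\lambda$ gives $\Sym^r M^{(n-1,1)} \cong \bigoplus_\lambda c_\lambda M^\lambda$, where $c_\lambda$ is the number of sequences $d$ with $\lambda(d) = \lambda$ and $\sum_i i d_i = r$. By definition this count is precisely $y_r^\lambda$, with the sum restricted to partitions $\lambda$ of $n$ since $\sum_i d_i = n$. There is no serious obstacle in this argument: it is essentially a bookkeeping exercise, and the only point demanding care is the orbit-to-Young-subgroup bijection and the verification that the indexing matches the combinatorial definition of $y_r^\lambda$ given just before the statement.
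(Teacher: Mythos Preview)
Your proposal is correct and follows essentially the same approach as the paper: both decompose $\Sym^r M^{(n-1,1)}$ into $\sym{n}$-orbits of monomials, index these orbits by sequences $d$ with $\sum_i d_i = n$ and $\sum_i i d_i = r$, and identify each orbit as the Young permutation module $M^{\lambda(d)}$. The only difference is that you spell out the stabiliser computation explicitly, whereas the paper leaves it as an ``easy to see'' step.
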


\begin{proof}
    Consider $\Sym^r M^{(n-1,1)}$ as the vector space of homogeneous polynomials in $\List{x}{n}$ of degree $r$. For each sequence $d=(d_0,d_1,\dots)$ of non-negative integers such that $\sum_{i\geq 0}d_i = n$ and $\sum_{i\geq 0} id_i = r$, let $V_d$ be the subspace of $\Sym^r M^{(n-1,1)}$ spanned by monomials $x^{\alpha}$ where the composition $\alpha$ has $d_i$ entries equal to $i$ for any $i\geq 0$. It is easy to see that $\Sym^r M^{(n-1,1)}$ decomposes as the direct sum of $V_d$, where $d$ runs over all sequences of non-negative integers such that $\sum_{i\geq 0}d_i = n$ and $\sum_{i\geq 0} id_i = r$, and that $V_d$ is in fact a $K\sym{n}$-submodule of $\Sym^r M^{(n-1,1)}$ isomorphic to $M^{\lambda}$, where $\lambda$ is obtained from $d$ by ordering its non-zero entries. The result follows.
\end{proof}

As a consequence, in the representation ring of $K\sym{n}$, we have the formulae
\begin{equation}\label{eq:Sformula}
    [\Sym^r S^{(n-1,1)}] = \sum_{\lambda} (y_r^{\lambda} - y_{r-1}^{\lambda}) [M^{\lambda}]
\end{equation}
for $2\leq r\leq p-1$ and if $p\mid n$,
\begin{equation}\label{eq:Dformula}
    [\Sym^r D^{(n-1,1)}] = \sum_{\lambda} (y_r^{\lambda} + y_{r-2}^{\lambda} - 2y_{r-1}^{\lambda}) [M^{\lambda}]
\end{equation}
for $3\leq r\leq p-1$, where both sums run over all partitions of $n$.

\begin{example}\label{ex:SandD}
    Suppose that $p\geq 5$. 
    We have that $y_1^{\lambda}$ is zero unless $\lambda = (n-1,1)$, in which case it is one. Similarly, we compute that $y_r^{\lambda}$ with $r=2,3,4$ (and $n\geq 2r)$ takes only values $0$ and $1$ and the parameters for which it equals $1$ are in \Cref{tab:coefficients}. Using \eqref{eq:Sformula} and \eqref{eq:Dformula}, we thus find
    \begin{align*}
        [\Sym^2 S^{(n-1,1)}] &= [M^{(n-2,2)}] \textnormal{ (valid even for $p=3$)},\\
        [\Sym^3 S^{(n-1,1)}] &= [M^{(n-2,1^2)}] + [M^{(n-3,3)}] - [M^{(n-2,2)}],\\
        [\Sym^4 S^{(n-1,1)}] &= [M^{(n-2,2)}] + [M^{(n-3,2,1)}] + [M^{(n-4,4)}] - [M^{(n-3,3)}],\\
        [\Sym^3 D^{(n-1,1)}] &= [M^{(n-2,1^2)}] + [M^{(n-3,3)}] - 2[M^{(n-2,2)}],\\
        [\Sym^4 D^{(n-1,1)}] &= 2[M^{(n-2,2)}] + [M^{(n-3,2,1)}] + [M^{(n-4,4)}] - [M^{(n-2,1^2)}] - 2[M^{(n-3,3)}].
    \end{align*}
\end{example}

\begin{table}[h]
    \centering
    \begin{tabular}{cc}
    \toprule
         $r$ & $\lambda$ such that $y_r^{\lambda}=1$ \\
         \midrule
         $2$ & $(n-1,1), (n-2,2)$\\
         \midrule
         $3$ & $(n-1,1), (n-2,1^2), (n-3,3)$ \\
         \midrule
         $4$ & $(n-1,1), (n-2,2), (n-2,1^2), (n-3,2,1), (n-4,4)$\\
         \bottomrule
    \end{tabular}
    \vspace{4pt}
    \caption{The values of $r=2,3,4$ and $\lambda$ for which $y_r^{\lambda}=1$. For other parameters in this range $y_r^{\lambda}=0$.}
    \label{tab:coefficients}
\end{table}

Recall that the Young permutation modules decompose as a direct sum of the (indecomposable) Young modules. From \eqref{eq:Sformula}, working over a field of characteristic $p$, the symmetric powers $\Sym^r S^{(n-1,1)}$ are sums of Young modules for $2\leq r\leq p-1$. Moreover if $p\mid n$, then so are the symmetric powers $\Sym^r D^{(n-1,1)}$ for $3\leq r\leq p-1$. The following result about Specht filtration is due to Donkin \cite[(2.6)]{DonkinSchurAlgebra87}.

\begin{theorem}\label{th:YSpecht}
    The Young modules have Specht filtration.
\end{theorem}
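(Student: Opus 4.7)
The plan is to route through the Schur algebra $S = S_K(n,r)$ with $n \geq r$, which is quasi-hereditary with standard modules $\Delta(\mu)$ and costandard modules $\nabla(\mu)$ indexed by partitions $\mu$ of $r$ of length at most $n$. The key tool will be the Schur functor $f : \mathrm{mod}\text{-}S \to \mathrm{mod}\text{-}K\sym{r}$, given by multiplication by an idempotent associated to a fixed weight, which is exact and satisfies $f(\nabla(\mu)) \cong S^{\mu}$ (under the convention for Specht modules employed in this paper; if the opposite convention is in force, one works instead with $f(\Delta(\mu))$ and dualises throughout). Consequently $f$ transports any $\nabla$-filtered $S$-module to a Specht-filtered $K\sym{r}$-module, so it suffices to realise each $Y^{\lambda}$ as $f(M)$ for some $\nabla$-filtered $S$-module $M$.

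The crux is the identification $Y^{\lambda} \cong f(T(\lambda))$, where $T(\lambda)$ denotes the indecomposable tilting module of $S$ of highest weight $\lambda$. I would prove this by matching characterisations. On the symmetric group side, $Y^{\lambda}$ is the unique indecomposable summand of the Young permutation module $M^{\lambda}$ containing a copy of the Specht module $S^{\lambda}$. On the Schur algebra side, $T(\lambda)$ is the unique indecomposable $S$-module of highest weight $\lambda$ admitting filtrations by both standard and costandard modules. Since $M^{\lambda}$ itself arises as the Schur functor image of an explicit $\nabla$-filtered $S$-module, namely a tensor product of symmetric powers of the natural $GL_n$-module, one can track indecomposable summands across $f$ and pin down $Y^{\lambda}$ as the image of $T(\lambda)$.

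With this identification in hand, the theorem is immediate. By the defining property of a tilting module, $T(\lambda)$ admits a filtration $0 = T_0 \subset T_1 \subset \cdots \subset T_m = T(\lambda)$ whose successive quotients are costandard modules $\nabla(\mu_i)$; applying the exact functor $f$ yields a filtration of $Y^{\lambda}$ whose successive quotients are the Specht modules $S^{\mu_i}$. The principal obstacle is precisely the tilting identification $Y^{\lambda} \cong f(T(\lambda))$, which requires the general machinery of quasi-hereditary algebras and of tilting theory, together with a careful verification that the Schur functor preserves indecomposability on the full subcategory of modules possessing a $\nabla$-filtration with highest weight $\lambda$. Once these foundational ingredients are available, the desired Specht filtration of $Y^{\lambda}$ is produced by a single application of $f$ to a $\nabla$-filtration of $T(\lambda)$.
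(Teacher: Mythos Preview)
The paper does not give a proof of this theorem; it simply attributes the result to Donkin and uses it as a black box. Your proposal therefore goes beyond what the paper does, supplying an actual argument.

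Your strategy via the Schur algebra, tilting modules, and the Schur functor is correct and is the now-standard way this result is understood (the identification $Y^{\lambda}\cong f(T(\lambda))$ is itself due to Donkin, though in later work than the 1987 paper the authors cite, which predates Ringel's tilting theory). You are right that the crux is this identification, and you are also right to flag that the Schur functor does not preserve indecomposability in general. The cleanest way to close that gap is not quite ``tracking summands'' as you phrase it, but rather invoking the double centraliser property: for tilting $S$-modules $T$ one has $\End_S(T)\cong \End_{K\sym{r}}(f(T))$, so local endomorphism rings are sent to local endomorphism rings and indecomposability is preserved on the tilting subcategory. With that ingredient in place, your final paragraph delivers the Specht filtration of $Y^{\lambda}$ exactly as claimed.
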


We immediately deduce the existence of a Specht filtration for our symmetric powers.

\begin{theorem}\label{th:SDSpecht}
    Suppose that $K$ is a field of characteristic $p$ and $n\geq 2$ is an integer. The symmetric power $\Sym^r S^{(n-1,1)}$ has a Specht filtration for $0\leq r\leq p-1$. Moreover if $p\mid n$, then the symmetric power $\Sym^r D^{(n-1,1)}$ has a Specht filtration for $3\leq r\leq p-1$. 
\end{theorem}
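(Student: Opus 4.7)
The plan is to reduce everything to \Cref{th:YSpecht} by showing each symmetric power in question is a direct sum of indecomposable Young modules. The cases $r=0$ and $r=1$ for $S^{(n-1,1)}$ give $K=S^{(n)}$ and $S^{(n-1,1)}$ itself, so I would dispose of these separately: each is a Specht module and so trivially admits a Specht filtration.

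For $2\leq r\leq p-1$, the key observation is that \Cref{le:Msplits} provides a direct sum decomposition
\[\Sym^r M^{(n-1,1)} \cong \Sym^r S^{(n-1,1)} \oplus \Sym^{r-1} M^{(n-1,1)},\]
so $\Sym^r S^{(n-1,1)}$ is a direct summand of $\Sym^r M^{(n-1,1)}$. By \Cref{le:Mdecomposition}, the latter decomposes as a direct sum of Young permutation modules $M^{\lambda}$, and James's theorem expresses each $M^{\lambda}$ as a direct sum of indecomposable Young modules. The Krull--Schmidt theorem then forces $\Sym^r S^{(n-1,1)}$ to itself be a direct sum of indecomposable Young modules, and \Cref{th:YSpecht} supplies the Specht filtration.

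For the statement about $D^{(n-1,1)}$ with $p\mid n$ and $3\leq r\leq p-1$, I would argue analogously starting from \Cref{cor:Ssplits}, which yields
\[\Sym^r S^{(n-1,1)} \cong \Sym^{r-1} S^{(n-1,1)} \oplus \Sym^r D^{(n-1,1)}.\]
Hence $\Sym^r D^{(n-1,1)}$ is a direct summand of $\Sym^r S^{(n-1,1)}$, which is already a direct sum of indecomposable Young modules by the previous paragraph. Krull--Schmidt and \Cref{th:YSpecht} then complete the proof.

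No real obstacle arises here: the substantive work has already been absorbed into the splitting results \Cref{le:Msplits} and \Cref{cor:Ssplits}, whose proofs constitute the heart of the paper. The only remaining task is to upgrade the representation-ring identities of \Cref{pr:ring}, in which the coefficients $y_r^{\lambda}-y_{r-1}^{\lambda}$ can be negative, into honest direct-sum decompositions through Krull--Schmidt, and then to invoke Donkin's theorem.
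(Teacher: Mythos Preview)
Your proposal is correct and follows essentially the same approach as the paper: dispose of $r\leq 1$ trivially, then for the remaining cases use the splitting results (\Cref{le:Msplits} and \Cref{cor:Ssplits}) together with \Cref{le:Mdecomposition} and James's decomposition of $M^\lambda$ into Young modules to exhibit each symmetric power as a direct sum of Young modules, and conclude via \Cref{th:YSpecht}. The paper's proof is terser---it simply cites \eqref{eq:Sformula} and \eqref{eq:Dformula} and asserts the modules are sums of Young modules---but the underlying logic is the same, and your explicit invocation of Krull--Schmidt makes transparent the step the paper leaves implicit.
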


\begin{proof}
    The statement is clear for $\Sym^r S^{(n-1,1)}$ with $r\leq 1$. From \eqref{eq:Sformula} and \eqref{eq:Dformula}, the other symmetric powers in question are sums of Young modules, and thus have Specht filtration by \Cref{th:YSpecht}.
\end{proof}

\begin{remark}\label{re:SmallSpecht}
    If $n\geq 4$, the irreducible module $D^{(n-1,1)}$ does not have a Specht filtration. However, while not included in \Cref{th:SDSpecht}, the symmetric square $\Sym^2 D^{(n-1,1)}$ has a Specht filtration -- as mentioned in \Cref{re:Ssplit}, the modules $\Sym^2 S^{(n-1,1)}$ and $M^{(n-2,2)}$ are isomorphic (see also \Cref{ex:SandD}) and by \Cref{pkostka1}, $M^{(n-1,1)}\cong Y^{(n-1,1)}\oplus Y^{(n-2,2)}$. Furthermore, $Y^{(n-1,1)}\cong M^{(n-1,1)}$ has dimension $n$ and $Y^{(n-2,2)}\cong S^{(n-2,2)}$ (see \cite[Proposition~5.40]{MathasHecke99} and \cite[Proposition~1.1]{HEMMER2006433}) has dimension $\frac{n(n-3)}{2}$.
    Hence $\Sym^2 S^{(n-1,1)} \cong M^{(n-1,1)} \oplus S^{(n-2,2)}$. Using the short exact sequence \eqref{eq:Ssequence} with $r=2$ and the structure of $M^{(n-1,1)}$, we conclude that $\Sym^2 D^{(n-1,1)} \cong K \oplus S^{(n-2,2)}$, which clearly has a Specht filtration.
\end{remark}

\subsection{Vertices}

Let $G$ be a finite group and $V$ an indecomposable $KG$-module. A \textit{vertex} of $V$ is a minimal subgroup $H\leq G$ for which there is an indecomposable $KH$-module $U$, called a \textit{source} of $V$, such that $V$ is an indecomposable summand of the induced module $U\Ind_H^G$. We note that if $K$ has positive characteristic $p$, then all vertices of $V$ are $p$-groups.

Any indecomposable summand of a permutation module has a trivial source. Thus, from \eqref{eq:Sformula} and \eqref{eq:Dformula}, we see that all indecomposable summands of $\Sym^r S^{(n-1,1)}$ with $2\leq r\leq p-1$ and, also of $\Sym^r D^{(n-1,1)}$ with $p\mid n$ and $3\leq r\leq p-1$ have a trivial source. The task of describing the vertices of these indecomposable summands is less straightforward; in particular, we need some simple observations about the coefficients $y_r^{\lambda}$.

\begin{lemma}\label{le:coefficients}
    Let $r<n$ be positive integers. If $\lambda$ is a partition of $n$ such that either
    \begin{enumerate}[label=\textnormal{(\roman*)}]
        \item $y_r^{\lambda}$ is non-zero, or
        \item $r\geq 2$ and $y_r^{\lambda} - y_{r-1}^{\lambda}$ is non-zero, or
        \item $r\geq 3$ and $y_r^{\lambda} + y_{r-2}^{\lambda} - 2y_{r-1}^{\lambda}$ is non-zero,
    \end{enumerate}
    then $n-r\leq \lambda_1 <n$.
\end{lemma}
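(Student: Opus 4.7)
My plan is to reduce the lemma to part (i), since (ii) and (iii) follow from (i) by linearity. The crux is to exploit the defining conditions of $y_r^{\lambda}$: the sequences $d=(d_0,d_1,\dots)$ satisfy $\sum_{i\geq 0} d_i = n$ (because the non-zero entries of $d$ are the parts of the partition $\lambda$ of $n$), and $\sum_{i\geq 0} id_i = r$.

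For (i), assume $y_r^{\lambda}\neq 0$ so that at least one such sequence $d$ exists. For the upper bound $\lambda_1<n$: if we had $\lambda_1 = n$, then $\lambda = (n)$ and the only non-zero entry of $d$ is some $d_{i_0} = n$, forcing $r = i_0 n$. Since $1 \leq r < n$, no non-negative integer $i_0$ works, a contradiction. For the lower bound $\lambda_1 \geq n-r$, I will bound $d_0$ from below: from $\sum_{i\geq 1} d_i \leq \sum_{i\geq 1} id_i = r$ and $\sum_{i\geq 0} d_i = n$, one gets $d_0 \geq n-r$. Since $r<n$, we have $d_0\geq n-r \geq 1$, so $d_0$ is one of the non-zero entries of $d$, i.e.\ one of the parts of $\lambda$. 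Thus $\lambda_1 \geq d_0 \geq n-r$.

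For (ii), if $y_r^{\lambda} - y_{r-1}^{\lambda} \neq 0$, then at least one of the two coefficients is non-zero. If $y_r^{\lambda}\neq 0$, apply (i) directly. If $y_{r-1}^{\lambda}\neq 0$, apply (i) with $r-1$ in place of $r$ (noting $1\leq r-1 < n$ by the hypothesis $r\geq 2$ and $r < n$) to get $n-(r-1) \leq \lambda_1 < n$, which is stronger than the claim. Part (iii) is identical in spirit: at least one of $y_r^{\lambda}, y_{r-1}^{\lambda}, y_{r-2}^{\lambda}$ must be non-zero, and the hypothesis $r\geq 3$ together with $r<n$ ensures we may apply (i) with any of $r, r-1, r-2$, each yielding $\lambda_1 \geq n-r$ and $\lambda_1 < n$.

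The only genuine content is the inequality $d_0 \geq n-r$ and the check that $\lambda_1 = n$ is impossible when $1 \leq r < n$; both are short counting arguments. I do not anticipate any real obstacle here — the lemma is essentially a bookkeeping statement about which $M^\lambda$ can appear in the decomposition of $\Sym^r M^{(n-1,1)}$ (namely only those with $\lambda_1$ large enough to accommodate the $n-r$ variables that must be raised to the power $0$ in a degree-$r$ monomial in $n$ variables).
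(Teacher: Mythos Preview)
Your proposal is correct and follows essentially the same argument as the paper: both derive $d_0 \geq n-r$ from $\sum_{i\geq 1} d_i \leq \sum_{i\geq 1} i d_i = r$, rule out $\lambda_1 = n$ by the same $r = i_0 n$ contradiction, and then reduce (ii) and (iii) to (i). Your version is in fact slightly more explicit in justifying why $d_0 \geq n-r$ forces $\lambda_1 \geq n-r$ (namely that $d_0 \geq 1$ makes $d_0$ a part of $\lambda$), which the paper leaves implicit.
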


\begin{proof}
    Suppose firstly that $y_r^{\lambda}>0$. Then there is a sequence $d=(d_0,d_1,\dots )$ of non-negative integers such that (a) the non-zero entries of $d$ are (in some order) the entries of $\lambda$, and (b) $\sum_{i\geq 0} id_i = r$. Thus we have $d_0 = n-\sum_{i\geq 1} d_i \geq n-\sum_{i\geq 0} id_i = n-r$  Hence $\lambda_1\geq n-r$. On the other hand, we cannot have $\lambda_1=n$, as otherwise there would be $i\geq 0$ such that $d_i=n$ and all remaining entries of $d$ would be zero; thus $in=r$, which is impossible as $0<r<n$.
    
    Hence, if (i) holds, then $n-r\leq \lambda_1 <n$. As $n-r<n-(r-1)<n-(r-2)$, we deduce the same if (ii) or (iii) holds.
\end{proof}

We will need the description of the vertices of the Young modules, which can be found in \cite[Theorem~4.6.3]{MartinSchur93} (due to Grabmeier \cite{GrabmeierYoung85} and Klyachko \cite{KlyachkoSummands83}). For any partition $\lambda$, the \textit{$p$-adic expansion} of $\lambda$ is a sequence of $p$-restricted partitions $(\lambda(0), \lambda(1), \dots )$ such that for any positive integer $i$, we have $\lambda_i = \sum_{j\geq 0} \lambda(j)_i p^j$. As with the $p$-adic expansion of positive integers, the $p$-adic expansion of a given partition $\lambda$ exists and is unique.

\begin{theorem}\label{th:Yvertex}
    Let $\lambda$ be a partition of $n$ and $K$ be a field of prime characteristic $p$. The $K\sym{n}$-module $Y^{\lambda}$ has a vertex given by a Sylow $p$-subgroup of $\sym{\rho}$, where $\rho$ is a partition with $|\lambda(j)|$ parts of size $p^j$ for any $j\geq 0$ (and no other parts).
\end{theorem}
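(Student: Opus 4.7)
The plan is to establish \Cref{th:Yvertex} in two stages: an upper bound on the vertex, followed by a matching lower bound.

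For the upper bound, observe that $M^{\lambda}=K\Ind_{\sym{\lambda}}^{\sym{n}}$ is a permutation module, so every indecomposable summand---and in particular $Y^{\lambda}$---has trivial source. Letting $P_{\sym{\lambda}}$ denote a Sylow $p$-subgroup of $\sym{\lambda}$, the index $[\sym{\lambda}:P_{\sym{\lambda}}]$ is coprime to $p$, so $K$ is a summand of $K\Ind_{P_{\sym{\lambda}}}^{\sym{\lambda}}$ and hence $M^{\lambda}$ is a summand of $K\Ind_{P_{\sym{\lambda}}}^{\sym{n}}$. By the general theory of trivial-source modules (or Mackey), every vertex of $Y^{\lambda}$ is $\sym{n}$-conjugate to a subgroup of $P_{\sym{\lambda}}$. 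Using the $p$-adic expansion $\lambda_i=\sum_{j\geq 0}\lambda(j)_ip^j$ of each part, one has $P_{\sym{\lambda}}\cong\prod_i\prod_j P_{p^j}^{\lambda(j)_i}=\prod_j P_{p^j}^{|\lambda(j)|}$, which is precisely a Sylow $p$-subgroup of $\sym{\rho}$. This yields the upper bound.

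For the reverse inclusion I would use the Brauer construction. For any $p$-subgroup $Q\leq\sym{n}$ and $p$-permutation module $V$, the Brauer quotient $V(Q)$ is a $KN_{\sym{n}}(Q)/Q$-module, and a classical criterion states that $Q$ is a vertex of an indecomposable $p$-permutation module $V$ exactly when $V(Q)\neq 0$ while $V(R)=0$ for every $p$-subgroup $R$ strictly containing $Q$. The Brauer quotient of $M^{\lambda}=K\Ind_{\sym{\lambda}}^{\sym{n}}$ at $Q$ is spanned by those cosets of $\sym{\lambda}$ in $\sym{n}$ fixed by $Q$, hence is computable combinatorially. The task is to match summands of $M^{\lambda}(P_{\rho})$ with the Brauer quotients $Y^{\mu}(P_{\rho})$ coming from the summands in the James decomposition $M^{\lambda}=Y^{\lambda}\oplus\bigoplus_{\mu\rhd\lambda}c_{\lambda,\mu}\,Y^{\mu}$, and to verify that $Y^{\lambda}(P_{\rho})\neq 0$ while $Y^{\lambda}(R)=0$ for every strictly larger $p$-subgroup $R$.

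The main obstacle is precisely this matching, which is sensitive both to the dominance order and to the recursive $p$-adic structure of $\lambda$. An effective strategy---essentially the one of Grabmeier and Klyachko---is to induct on $n$, starting from the base case $\lambda=(p^j)$ where $Y^{(p^j)}=K$ trivially has a Sylow $p$-subgroup of $\sym{p^j}$ as vertex, and then to propagate the vertex information through the multiplicative structure of Young permutation modules using a decomposition of $M^{\lambda}\otimes M^{\mu}$ into Young permutation modules together with Green's indecomposability theorem. The unique appearance of $Y^{\lambda}$ in $M^{\lambda}$, combined with the fact that each $Y^{\mu}$ with $\mu\rhd\lambda$ has $P_{\sym{\lambda}}$-fixed coset contribution controlled by the (inductively known) vertex of $Y^{\mu}$, lets one isolate the contribution of $Y^{\lambda}$ to $M^{\lambda}(P_{\rho})$ and complete the argument.
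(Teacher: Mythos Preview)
The paper does not prove this theorem at all: it is quoted from \cite[Theorem~4.6.3]{MartinSchur93} (attributed there to Grabmeier and Klyachko) and used as a black box. So there is no ``paper's own proof'' to compare against.

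That said, your proposal does not establish the result, and the gap is already in the upper bound. You correctly show that any vertex of $Y^{\lambda}$ is conjugate to a subgroup of a Sylow $p$-subgroup $P_{\sym{\lambda}}$ of $\sym{\lambda}$. The error is the displayed identification
\[
P_{\sym{\lambda}}\cong\prod_i\prod_j P_{p^j}^{\lambda(j)_i}=\prod_j P_{p^j}^{|\lambda(j)|}.
\]
This conflates the \emph{partition} $p$-adic expansion $\lambda_i=\sum_j \lambda(j)_i p^j$ (with $\lambda(j)$ $p$-restricted, so individual $\lambda(j)_i$ may well exceed $p-1$) with the ordinary integer $p$-adic expansion of each $\lambda_i$. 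Only the latter describes the Sylow $p$-subgroup of $\sym{\lambda_i}$. A concrete counterexample: take $\lambda=(p,p-1)$. Here $\lambda(0)=(p,p-1)$ is already $p$-restricted, so $\rho=(1^{2p-1})$ and the Sylow $p$-subgroup of $\sym{\rho}$ is trivial, whereas $P_{\sym{\lambda}}=P_{\sym{p}\times\sym{p-1}}$ has order $p$. The theorem is asserting that $Y^{(p,p-1)}$ is projective (true, since $(p,p-1)$ is $p$-restricted), which your upper bound alone cannot see. In general $P_{\sym{\rho}}$ is a \emph{proper} subgroup of $P_{\sym{\lambda}}$, and proving the sharper containment is where the real content lies.

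Your lower-bound paragraph is honest about this (``the main obstacle is precisely this matching''), but what follows is only a description of the Grabmeier--Klyachko strategy rather than an argument. As written, neither direction is complete.
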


We will need \Cref{th:Yvertex} in particular cases.

\begin{lemma}\label{le:p-adic}
    Suppose that $p$ is a prime and $n$ is a positive integer divisible by $p$. If $\mu$ is a partition of $n$ with $n-p <\mu_1 < n$, then the Young module $Y^{\mu}$ has a vertex given by a Sylow $p$-subgroup of $\sym{n-p}$ or $\sym{n-2p}$.
\end{lemma}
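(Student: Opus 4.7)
The plan is to apply \Cref{th:Yvertex} after pinning down the $p$-adic expansion of $\mu$ explicitly. Write $a = n - \mu_1 \in \{1, \dots, p-1\}$ and $n = bp$ with $b \geq 1$. Since $\mu_2 + \mu_3 + \cdots = a < p$, every part $\mu_i$ with $i \geq 2$ satisfies $\mu_i < p$. Consequently $\mu(j)_i = 0$ for all $j \geq 1$ and $i \geq 2$ in any $p$-adic decomposition of $\mu$, so each $\mu(j)$ with $j \geq 1$ is a one-part partition and the whole expansion is determined by the single number $\mu(0)_1$.

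The partition and $p$-restricted conditions on $\mu(0)$, combined with $\mu(0)_i = \mu_i$ for $i \geq 2$, confine $\mu(0)_1$ to the interval $[\mu_2, \mu_2 + p - 1]$. The divisibility $p \mid \mu_1 - \mu(0)_1$ further forces $\mu(0)_1 \equiv p - a \pmod{p}$, leaving a unique choice: $\mu(0)_1 = p - a$ when $\mu_2 \leq p - a$ (Case 1), and $\mu(0)_1 = 2p - a$ when $\mu_2 \geq p - a + 1$ (Case 2). A direct check of $p$-restrictedness, together with the uniqueness of the $p$-adic expansion, confirms this guess. Note that Case 2 forces $b \geq 2$, since we need $2p - a \leq \mu_1 = bp - a$.

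I then read off the sizes $|\mu(j)|$: in Case 1, $|\mu(0)| = (p - a) + a = p$, and in Case 2, $|\mu(0)| = (2p - a) + a = 2p$; for $j \geq 1$, $|\mu(j)|$ equals the $(j-1)$-th base-$p$ digit of $N := b - 1$ (Case 1) or $N := b - 2$ (Case 2), as the one-part partitions $\mu(j)$ for $j \geq 1$ form the $p$-adic expansion of the single-part partition $(N)$. Applying \Cref{th:Yvertex}, $\rho$ has $|\mu(0)|$ parts of size $1$ together with further parts of sizes $p, p^2, \dots$ whose multiplicities are the base-$p$ digits of $N$. The parts of size $1$ contribute trivially to a Sylow $p$-subgroup of $\sym{\rho}$; comparing with the base-$p$ digits $(0, d_0, d_1, \dots)$ of $Np$ shows that the remaining Sylow factors agree with those of a Sylow $p$-subgroup of $\sym{Np}$. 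Since two Sylow $p$-subgroups of Young subgroups of $\sym{n}$ sharing the same multiset of non-trivial block sizes are conjugate in $\sym{n}$, and $Np \in \{n - p, n - 2p\}$, the lemma follows.

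The main obstacle is the combinatorial derivation of the dichotomy $\mu(0)_1 \in \{p - a, 2p - a\}$, which requires simultaneously enforcing the partition, $p$-restrictedness, and divisibility conditions on $\mu(0)$. Once this is in hand, the passage to vertices is routine base-$p$ digit bookkeeping. The edge case $b = 1$ (where $n = p$) is handled automatically: then $\mu_2 \leq \mu_1 = p - a$ rules out Case 2, and Case 1 yields a trivial vertex, consistent with $\sym{n - p} = \sym{0}$.
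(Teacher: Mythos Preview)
Your proof is correct and follows essentially the same route as the paper's. The paper also splits into two cases according to whether $\mu_2 + n - p \leq \mu_1$ (your Case~1, $\mu_2 \leq p-a$) or $\mu_2 + n - p > \mu_1$ (your Case~2), writes down $\mu(0)$ explicitly in each case as a $p$-restricted partition of $p$ or $2p$, identifies the remaining $\mu(j)$ as one-part partitions coming from the base-$p$ digits of $n-p$ or $n-2p$, and concludes via \Cref{th:Yvertex} and the standard fact that the Sylow $p$-subgroup of $\sym{\rho}$ so obtained is a Sylow $p$-subgroup of $\sym{n-p}$ or $\sym{n-2p}$.
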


\begin{proof}
    Suppose firstly that $\mu_2+n-p \leq \mu_1$. Let $n-p = \sum_{j\geq 0} r_j p^j$ be the usual $p$-adic expansion of $n-p$ with each $r_j < p$. As $p\mid n$, the coefficient $r_0$ is $0$ and the $p$-adic expansion of $\mu$ is $(\mu(0), (r_1), (r_2), \dots)$, where $\mu(0) = (\mu_1-n+p, \mu_2, \mu_3, \dots)$ --- this defines a partition of $p$ distinct from $(p)$ since $\mu_2+n-p \leq \mu_1 < n$; thus is is a $p$-restricted partition. By \Cref{th:Yvertex}, the Young module $Y^{\mu}$ has a vertex given by a Sylow $p$-subgroup of $\sym{\rho}$, where $\rho$ has $r_j$ parts of size $p^j$ for all $j\geq 1$ and no other parts (the parts of size $1$ can be discarded). Since $n-p = \sum_{j\geq 0} r_j p^j$ is the $p$-adic expansion of $n-p$, we conclude that $\sym{\rho}\leq \sym{n-p}$ and $p$ does not divide the index of $\sym{\rho}$ in $\sym{n-p}$; hence, any Sylow $p$-subgroup of $\sym{\rho}$ is a Sylow $p$-subgroup of $\sym{n-p}$.

    We now assume that $\mu_2+n-p > \mu_1$ and let $n-2p = \sum_{j\geq 0} r_j p^j$ be the usual $p$-adic expansion of $n-2p$ with each $r_j < p$. As before, $r_0=0$, however, this time the $p$-adic expansion of $\mu$ is $(\mu(0), (r_1), (r_2), \dots)$, where $\mu(0) = (\mu_1-n+2p, \mu_2, \mu_3, \dots)$ --- this is a partition of $2p$ since $\mu_2\leq n - \mu_1<p<\mu_1-n+2p$ and it is $p$-restricted as all parts apart from the first one are less than $p$ and $\mu_1-n+2p-\mu_2 < p$ by the earlier assumption. Analogously to the first paragraph, we deduce that $Y^{\mu}$ has a vertex given by a Sylow $p$-subgroup of $\sym{n-2p}$.
\end{proof}

 Recall that $M^{\lambda}$ is a direct sum of Young modules $Y^{\mu}$ with $\mu\unrhd\lambda$. Since $Y^{(n)} = M^{(n)}$ is the trivial $K\sym{n}$-module, it is not an indecomposable summand of any Young permutation module of dimension divisible by the characteristic of $K$. We combine this with \eqref{eq:Sformula}, \eqref{eq:Dformula}, \Cref{le:coefficients} and \Cref{le:p-adic}, to prove the final result from \Cref{se:intro}, which we recall here for the convenience of the reader.

\setcounter{section}{1}
\setcounter{theorem}{2}
\begin{theorem}
    Suppose that $K$ is a field of characteristic $p$ and $n\geq 3$ is an integer divisible by $p$. The indecomposable summands of the symmetric powers $\Sym^r S^{(n-1,1)}$ with $2\leq r\leq p-1$ and of the symmetric powers $\Sym^r D^{(n-1,1)}$ with $3\leq r\leq p-1$ have a vertex given by a Sylow $p$-subgroup of $\sym{n-p}$ or $\sym{n-2p}$. 
\end{theorem}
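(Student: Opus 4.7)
The plan is to reduce the vertex question for both families of symmetric powers to the vertex question for the single module $\Sym^r M^{(n-1,1)}$, and then combine the dominance structure of Young permutation modules with \Cref{le:coefficients} and \Cref{le:p-adic} to identify the possible vertices.

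First, by \Cref{le:Msplits}, $\Sym^r S^{(n-1,1)}$ is a direct summand of $\Sym^r M^{(n-1,1)}$ for $2\le r\le p-1$, and by \Cref{cor:Ssplits}, $\Sym^r D^{(n-1,1)}$ is a direct summand of $\Sym^r S^{(n-1,1)}$ for $3\le r\le p-1$; hence by transitivity $\Sym^r D^{(n-1,1)}$ is also a direct summand of $\Sym^r M^{(n-1,1)}$ in this range. By the Krull--Schmidt theorem it therefore suffices to prove that every indecomposable summand of $\Sym^r M^{(n-1,1)}$ for $1\le r\le p-1$ has a vertex of the claimed form.

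Next, I would decompose $\Sym^r M^{(n-1,1)}$ into Young modules. By \Cref{le:Mdecomposition} it equals $\bigoplus_\lambda y_r^\lambda M^\lambda$, and each $M^\lambda$ is a direct sum of Young modules $Y^\mu$ with $\mu\unrhd\lambda$ by James' theorem. Thus every indecomposable summand is a Young module $Y^\mu$ with $\mu\unrhd\lambda$ for some partition $\lambda$ of $n$ satisfying $y_r^\lambda\ne 0$. By \Cref{le:coefficients}(i) such a $\lambda$ satisfies $\lambda_1\ge n-r > n-p$, and the dominance $\mu\unrhd\lambda$ then forces $\mu_1\ge\lambda_1 > n-p$.

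The remaining task --- the only step requiring a little work --- is to rule out $\mu=(n)$, i.e.\ $Y^\mu\cong K$, whose vertex is a full Sylow $p$-subgroup of $\sym n$ and would violate the conclusion. For this I would invoke the observation recalled just before the theorem: the trivial Young module $Y^{(n)}$ is not a summand of any $M^\lambda$ whose dimension is divisible by $p$ (which in turn follows from the standard fact that $K$ is a summand of $M^\lambda$ precisely when $\dim M^\lambda\not\equiv 0\pmod p$, since the natural maps $K\to M^\lambda\to K$ compose to multiplication by $\dim M^\lambda$). For any contributing $\lambda$ we have $\lambda_1<n$ (otherwise $\lambda=(n)$ gives $y_r^\lambda=0$ when $1\le r<n$) and $\lambda_1>n-p$, so setting $s=n-\lambda_1$ we have $1\le s\le p-1$; using $p\mid n$ together with Lucas' theorem then yields $p\mid\binom{n}{s}$, and since $\binom{n}{s}$ divides $\dim M^\lambda=\binom{n}{\lambda_1,\lambda_2,\ldots}$ we conclude $p\mid\dim M^\lambda$. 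Hence $\mu\ne(n)$, and therefore $n-p<\mu_1<n$. Applying \Cref{le:p-adic} to $\mu$ now delivers exactly the desired conclusion that $Y^\mu$ has vertex a Sylow $p$-subgroup of $\sym{n-p}$ or of $\sym{n-2p}$, completing the argument.
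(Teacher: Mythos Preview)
Your proof is correct and follows the same overall architecture as the paper's: reduce to Young modules, use \Cref{le:coefficients} to bound $\mu_1$ from below, rule out $\mu=(n)$ via divisibility of $\dim M^\lambda$, and finish with \Cref{le:p-adic}.

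The one genuine difference is in the reduction step. The paper works directly with the representation-ring formulae \eqref{eq:Sformula} and \eqref{eq:Dformula}, picks a $\lambda$ with $y_r^\lambda-y_{r-1}^\lambda>0$ (respectively $y_r^\lambda+y_{r-2}^\lambda-2y_{r-1}^\lambda>0$), and then invokes parts (ii) and (iii) of \Cref{le:coefficients}. You instead observe that, via \Cref{le:Msplits} and \Cref{cor:Ssplits}, both $\Sym^r S^{(n-1,1)}$ and $\Sym^r D^{(n-1,1)}$ are already direct summands of $\Sym^r M^{(n-1,1)}$, so by Krull--Schmidt it is enough to treat the indecomposable summands of the latter; this needs only part (i) of \Cref{le:coefficients}. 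Your route is slightly more economical (it bypasses the explicit alternating-sum formulae and the stronger parts of \Cref{le:coefficients}), while the paper's route stays closer to the machinery just developed. One cosmetic point: you only need the range $2\le r\le p-1$ in your reduction, not $1\le r\le p-1$, though the case $r=1$ causes no harm.
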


\begin{proof}
    We focus on the symmetric powers of $D^{(n-1,1)}$; the case of the symmetric powers of $S^{(n-1,1)}$ follows along the same lines. By \eqref{eq:Dformula}, the symmetric powers of $D^{(n-1,1)}$ in question are direct sums of Young modules. Let $Y^{\mu}$ be one of them. Then, by \eqref{eq:Dformula}, there is $\lambda\unlhd\mu$ such that $y_r^{\lambda} + y_{r-2}^{\lambda} - 2y_{r-1}^{\lambda}$ is positive and $Y^{\mu}$ is a direct summand of $M^{\lambda}$.
    
    By \Cref{le:coefficients}(iii), we have $n-p < n-r\leq \lambda_1 < n$. Thus also $n-p < \mu_1$. The dimension of $M^{\lambda}$ is the index of $\sym{\lambda}$ in $\sym{n}$, which equals $n!/\lambda!$. Since all parts of $\lambda$ apart from $\lambda_1$ are less than $p$ and $p$ divides $n!/\lambda_1!$, it also divides the dimension of $M^{\lambda}$. By the above discussion, $\mu\neq (n)$, that is $\mu_1 < n$. The result follows from \Cref{le:p-adic}.
\end{proof}
\setcounter{section}{3}
\setcounter{theorem}{14}

\begin{remark}\label{re:Smallvertex}
    The statement of \Cref{th:SDvertex} fails to hold for smaller non-negative integers $r$. This is easy to see using the fact that all indecomposable modules with a vertex given by a Sylow $p$-subgroup of $\sym{n-p}$ have their dimensions divisible by $p$. However, the dimensions of the $r$th symmetric powers of $S^{(n-1,1)}$ with $r=0,1$ and $D^{(n-1,1)}$ with $r=0,1,2$ are not divisible by $p$. 
\end{remark}

\section{Application to calculations of $p$-Kostka numbers}\label{S:kostka}
For a partition $\lambda$ of $n$, we have a decomposition of the Young permutation module $M^\lambda$ into Young modules as follows,
\[
M^\lambda\cong Y^\lambda \oplus \bigoplus_{\mu\rhd\lambda}[M^\lambda:Y^\mu]Y^\mu.
\] The multiplicities $ [M^\lambda: Y^\mu]$ are called \textit{$p$-Kostka numbers}.

Fix a positive integer $n$ divisible by $p$. In this section, we show positivity of some $p$-Kostka numbers using the formulae we obtained in \Cref{ex:SandD}. In particular, we use the equation
\[
[\Sym^4 D^{(n-1,1)}] = 2[M^{(n-2,2)}] + [M^{(n-3,2,1)}] + [M^{(n-4,4)}] - [M^{(n-2,1^2)}] - 2[M^{(n-3,3)}].
\]
 Let $m$ and $n$ be positive integers with p-adic expansions $m=\sum_{i\geq 0}m_ip^i$ and $n=\sum_{i\geq 0}n_ip^i$. We say that \textit{$m$ is $p$-contained in $n$} if $m_i \leq n_i$ for all indices $i \geq 0$.
 The $p$-Kostka numbers $[M^{\lambda} : Y^{\mu}]$ for two-part partitions $\lambda$ and $\mu$ are completely determined in \cite{HENKE}. 
\begin{theorem}\label{pkostka1}
    Suppose $0\leq 2r\leq 2s\leq n$. Then $[M^{(n-s,s)}:Y^{(n-r,r)}]$ is $1$ if $s-r$ is $p$-contained in $n-2r$ and $0$ otherwise. 
\end{theorem}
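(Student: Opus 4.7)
The plan is to induct on $s$, using James's decomposition of Young permutation modules (recalled at the start of Section~3.1) as the organising structure. By this theorem, $M^{(n-s,s)} \cong \bigoplus_{r=0}^{s} k_{r,s}\, Y^{(n-r,r)}$ with $k_{s,s}=1$, so the task reduces to identifying the multiplicities $k_{r,s} := [M^{(n-s,s)}:Y^{(n-r,r)}]$ for $r<s$. The base case $s=0$ is immediate, as $M^{(n)}=Y^{(n)}$ and the condition ``$0$ is $p$-contained in $n$'' holds vacuously.

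For the inductive step, I would combine two ingredients. The first is the characteristic-free short exact sequence
\[
0 \to S^{(n-s,s)} \to M^{(n-s,s)} \to M^{(n-s+1,s-1)} \to 0,
\]
which arises from the natural sum-over-subsets map and lets me compare multiplicities in $M^{(n-s,s)}$ with the already-known multiplicities in $M^{(n-s+1,s-1)}$. The second is a dimension count using the vertex formula for Young modules (\Cref{th:Yvertex}), which for two-part partitions gives a manageable closed form for $\dim Y^{(n-r,r)}$ in terms of the $p$-adic expansions of $n-r$ and $r$. The bridge to the $p$-containment condition is Lucas's theorem: $s-r$ is $p$-contained in $n-2r$ if and only if $\binom{n-2r}{s-r}\not\equiv 0 \pmod{p}$.

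To show $k_{r,s}\geq 1$ precisely when the $p$-containment condition holds, I would construct, for each admissible $r$, an explicit $K\sym{n}$-homomorphism $\varphi_r : M^{(n-r,r)} \to M^{(n-s,s)}$ coming from the natural sum-over-containing-subsets map $[I]\mapsto \sum_{J\supset I,\,|J|=s}[J]$, and verify that its restriction to the $Y^{(n-r,r)}$ summand of $M^{(n-r,r)}$ (present with multiplicity $1$ by James) is injective exactly when $\binom{n-2r}{s-r}\not\equiv 0 \pmod p$. This is done by composing $\varphi_r$ with the dual map going back from $M^{(n-s,s)}$ to $M^{(n-r,r)}$ and reading off the scalar via Lucas's theorem. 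The bound $k_{r,s}\leq 1$ should then follow from the fact that $\operatorname{Hom}_{K\sym{n}}(M^{(n-s,s)}, M^{(n-r,r)})$ has a controlled dimension given by double cosets (one-dimensional per intersection size), together with $Y^{(n-r,r)}$ being indecomposable with trivial source.

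The hardest step is the bookkeeping needed to make the dimension identity
\[
\binom{n}{s} = \sum_{r=0}^{s} k_{r,s}\,\dim Y^{(n-r,r)}
\]
close up at each inductive step: verifying that the Specht module $S^{(n-s,s)}$ contributes exactly the ``new'' copies of $Y^{(n-r,r)}$ required by the $p$-containment condition, and no others. This will ultimately reduce to a Lucas-theoretic identity relating the $p$-adic expansions of $n-r$, $r$, and $s-r$, which is where the main combinatorial work lies.
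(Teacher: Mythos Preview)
The paper does not prove this theorem at all: it is quoted from \cite{HENKE} and used as a black box. So there is no ``paper's own proof'' to match; the relevant comparison is whether your sketch would stand on its own.

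As written, the plan has real gaps. First, the short exact sequence
\[
0 \to S^{(n-s,s)} \to M^{(n-s,s)} \to M^{(n-s+1,s-1)} \to 0
\]
does not split in general in characteristic $p$, so it gives you equalities only in the Grothendieck group, not at the level of Young-module multiplicities. Knowing the composition factors of $M^{(n-s,s)}$ and $M^{(n-s+1,s-1)}$ does not let you read off $[M^{(n-s,s)}:Y^{(n-r,r)}]$ from $[M^{(n-s+1,s-1)}:Y^{(n-r,r)}]$ without extra input (indeed, $S^{(n-s,s)}$ is typically not a sum of Young modules). Second, your proposed upper bound $k_{r,s}\le 1$ does not follow from the double-coset count: $\dim\operatorname{Hom}(M^{(n-s,s)},M^{(n-r,r)})=r+1$, and since $Y^{(n-r,r)}$ is not projective and $\operatorname{End}(Y^{(n-r,r)})$ is not in general one-dimensional, there is no direct passage from that Hom dimension to a multiplicity bound. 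Third, the dimension identity $\binom{n}{s}=\sum_r k_{r,s}\dim Y^{(n-r,r)}$ is only useful if you already know $\dim Y^{(n-r,r)}$; \Cref{th:Yvertex} gives vertices, not dimensions, and extracting dimensions from it is essentially as hard as the theorem you are trying to prove.

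The one piece that does work cleanly is the lower bound: composing the inclusion map $\varphi_r$ with its transpose and applying Lucas's theorem does show $k_{r,s}\ge 1$ when $\binom{n-2r}{s-r}\not\equiv 0\pmod p$. To close the argument you would need an independent handle on either $\dim Y^{(n-r,r)}$ or on the upper bound for $k_{r,s}$; Henke obtains this via Schur--Weyl duality and tilting modules for $\mathrm{GL}_2$, which is a genuinely different toolkit from anything in your outline.
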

It follows that if  $p>3$ and $p\mid n$, we have
\begin{align*}
    M^{(n-2,2)}&\cong Y^{(n-2,2)}\oplus Y^{(n-1,1)},\\
    M^{(n-3,3)}&\cong 
          Y^{(n-3,3)}\oplus Y^{(n-2,2)}\oplus Y^{(n-1,1)}
    ,\\
    M^{(n-4,4)}&\cong \left\{ \begin{array}{ll}
          Y^{(n-4,4)}\oplus Y^{(n-3,3)}\oplus Y^{(n-1,1)}& \text{ if $p=5$,}\\
          Y^{(n-4,4)}\oplus Y^{(n-3,3)}\oplus Y^{(n-2,2)}\oplus Y^{(n-1,1)}& \text{ if $p>5$.}\\
    \end{array}\right.
\end{align*}
The $p$-Kostka numbers for the first three-part partition $(n-2,1^2)$ (in the lexicographic order) are determined in \cite{LimWang}. In particular, we use the statement in the case when $p\mid n$.
\begin{theorem}\label{pkostka2}
     Suppose $n\geq 3$ and $p\mid n$ .We have \[M^{(n-2,1^2)}\cong Y^{(n-1,1)}\oplus Y^{(n-2,2)}\oplus Y^{(n-2,1^2)}.\]
\end{theorem}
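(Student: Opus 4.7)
The plan is to determine each multiplicity $a_\mu := [M^{(n-2,1^2)} : Y^\mu]$ for partitions $\mu$ of $n$. By James's theorem only $\mu\unrhd(n-2,1^2)$ contribute, restricting the candidates to $\mu\in\{(n),(n-1,1),(n-2,2),(n-2,1^2)\}$, with $a_{(n-2,1^2)}=1$ automatic. I will show the remaining multiplicities are $0,1,1$ respectively.

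The trivial summand $Y^{(n)}=K$ is eliminated first: it is a direct summand of $M^\lambda$ if and only if $p\nmid\dim M^\lambda$, since the composite of the canonical inclusion $K\hookrightarrow M^\lambda$ with the augmentation $M^\lambda\twoheadrightarrow K$ equals multiplication by $\dim M^\lambda$. Here $\dim M^{(n-2,1^2)}=n(n-1)$ is divisible by $p$, giving $a_{(n)}=0$.

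Next, I would exploit the identification $M^{(n-1,1)}\otimes M^{(n-1,1)}\cong M^{(n-2,1^2)}\oplus M^{(n-1,1)}$, arising from the $\sym{n}$-set partition of ordered pairs into off-diagonal and diagonal parts. Assuming $p$ is odd (the $p=2$ case requires a separate argument), the tensor square also splits as $\Sym^2 M^{(n-1,1)}\oplus\Lambda^2 M^{(n-1,1)}$, and \Cref{le:Mdecomposition} gives $\Sym^2 M^{(n-1,1)}\cong M^{(n-1,1)}\oplus M^{(n-2,2)}$. Krull--Schmidt cancellation of the common $M^{(n-1,1)}$ produces
\[
M^{(n-2,1^2)}\cong M^{(n-2,2)}\oplus\Lambda^2 M^{(n-1,1)}.
\]
\Cref{pkostka1} (applicable since $1$ is $p$-contained in $n-2$ for $p\mid n$ and $p\geq 3$) then yields $M^{(n-2,2)}\cong Y^{(n-1,1)}\oplus Y^{(n-2,2)}$, supplying the required copies of $Y^{(n-1,1)}$ and $Y^{(n-2,2)}$.

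It remains to show $\Lambda^2 M^{(n-1,1)}\cong Y^{(n-2,1^2)}$, which is the most involved step. I would realize $\Lambda^2 M^{(n-1,1)}$ as the induced module $\mathrm{Ind}_{\sym{n-2}\times\sym{2}}^{\sym{n}}(K\boxtimes\mathrm{sgn})$ and use Mackey's formula to compute $\dim_K\End_{K\sym{n}}(\Lambda^2 M^{(n-1,1)})=2$: the three double cosets in $(\sym{n-2}\times\sym{2})\backslash\sym{n}/(\sym{n-2}\times\sym{2})$ are indexed by the overlap $|S\cap S'|\in\{0,1,2\}$ of two $2$-subsets, and they contribute dimensions $0$, $1$, $1$ respectively to the endomorphism algebra by Frobenius reciprocity, with the overlap-$0$ term vanishing by a sign-versus-trivial mismatch on a $\sym{2}$-factor. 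The nontrivial generator is the $\sym{n}$-equivariant endomorphism $\phi(x_i\wedge x_j):=\sum_{l\notin\{i,j\}}(x_i\wedge x_l-x_j\wedge x_l)$. The technical heart of the proof is the explicit calculation --- obtained by expanding $\phi^2$ as a double sum with careful bookkeeping of signs in the exterior square --- of the relation
\[
\phi^2=2(n-2)\,\mathrm{id}+(n-4)\,\phi.
\]
Reducing modulo $p$ using $p\mid n$ gives $(\phi+2)^2=0$, so the endomorphism algebra is the local ring $K[\epsilon]/(\epsilon^2)$ and $\Lambda^2 M^{(n-1,1)}$ is indecomposable. As a summand of $M^{(n-2,1^2)}$ it must equal some $Y^\mu$ with $\mu\unrhd(n-2,1^2)$, and its dimension $n(n-1)/2$ exceeds those of $Y^{(n-1,1)}$ and $Y^{(n-2,2)}$ (namely $n$ and $n(n-3)/2$, the latter using \Cref{re:SmallSpecht}) for $n\geq 4$, forcing $\mu=(n-2,1^2)$. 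Small-$n$ edge cases can be verified separately.
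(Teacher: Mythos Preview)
The paper does not prove this theorem; it is quoted from \cite{LimWang}. So there is no ``paper's own proof'' to compare against, and your argument should be assessed on its own.

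For odd $p$ (and $n\geq 4$, which is automatic once $p$ is odd and $p\mid n$ except for the degenerate case $n=3$ where $(n-2,2)$ is not a partition), your proof is correct. The decomposition $M^{(n-1,1)}\otimes M^{(n-1,1)}\cong M^{(n-2,1^2)}\oplus M^{(n-1,1)}$ and the splitting $V^{\otimes 2}\cong\Sym^2 V\oplus\Lambda^2 V$ for $p$ odd are standard, and together with \Cref{le:Mdecomposition} and Krull--Schmidt they give $M^{(n-2,1^2)}\cong M^{(n-2,2)}\oplus\Lambda^2 M^{(n-1,1)}$ as you claim. Your Mackey computation of $\dim_K\End_{K\sym{n}}(\Lambda^2 M^{(n-1,1)})=2$ is right, and I checked your relation $\phi^2=2(n-2)\,\id+(n-4)\,\phi$ directly: it holds, and reduces to $(\phi+2)^2=0$ when $p\mid n$, so the endomorphism ring is local and $\Lambda^2 M^{(n-1,1)}$ is indecomposable. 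The dimension comparison then pins it down as $Y^{(n-2,1^2)}$. This is a pleasant, elementary, and self-contained argument that avoids the machinery behind the cited reference.

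The one genuine gap you flag yourself is $p=2$: the $\Sym^2\oplus\Lambda^2$ splitting of the tensor square fails there, so steps~3--5 collapse. Since the paper only invokes \Cref{pkostka2} in \Cref{S:kostka} under the standing assumption $p>3$, your odd-$p$ argument already suffices for every application in the paper; but as stated the theorem covers $p=2$ as well, and that case remains unaddressed in your proposal.
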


If $p>5$, we have
\[ [\Sym^4 D^{(n-1,1)}] =
        [M^{(n-3,2,1)}]+[Y^{(n-4,4)}]-[Y^{(n-3,3)}]-[Y^{(n-2,1^2)}].
\]
If $p=5$, we have
\[
[\Sym^4 D^{(n-1,1)}] =
        [M^{(n-3,2,1)}]+[Y^{(n-4,4)}]-[Y^{(n-3,3)}]-[Y^{(n-2,2)}]-[Y^{(n-2,1^2)}].
\]
Since the multiplicity of any Young module on the right side of the equation is nonnegative, we have the following result.
\begin{corollary}\label{co:Kostka}
    Suppose $p>3$ and $p\mid n$. The $p$-Kostka numbers $[M^{(n-3,2,1)}:Y^{(n-3,3)}]$ and $[M^{(n-3,2,1)}:Y^{(n-2,1^2)}]$ are positive. If $p=5$, then $[M^{(n-3,2,1)}:Y^{(n-2,2)}]$ is also positive.
\end{corollary}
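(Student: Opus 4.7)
The plan is to combine the formula for $[\Sym^4 D^{(n-1,1)}]$ recorded in \Cref{ex:SandD} with the Young-module decompositions supplied by \Cref{pkostka1} and \Cref{pkostka2}, and then exploit the fact that $\Sym^4 D^{(n-1,1)}$ is a genuine $K\sym{n}$-module, so by Krull--Schmidt its multiplicity of every indecomposable Young module must be nonnegative.

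First I would substitute the decompositions of $M^{(n-2,2)}$, $M^{(n-3,3)}$ and $M^{(n-4,4)}$ (all furnished by \Cref{pkostka1}, with a case split at $p=5$ since $M^{(n-4,4)}$ behaves differently there) and of $M^{(n-2,1^2)}$ (furnished by \Cref{pkostka2}) into the identity
\[
[\Sym^4 D^{(n-1,1)}] = 2[M^{(n-2,2)}] + [M^{(n-3,2,1)}] + [M^{(n-4,4)}] - [M^{(n-2,1^2)}] - 2[M^{(n-3,3)}]
\]
taken from \Cref{ex:SandD}. Direct bookkeeping of the resulting coefficients of each $[Y^\mu]$ shows that the $Y^{(n-1,1)}$ and $Y^{(n-2,2)}$ contributions cancel completely when $p>5$, while only the $Y^{(n-1,1)}$ contribution cancels when $p=5$, leaving a residual $-[Y^{(n-2,2)}]$. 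These manipulations reproduce the two compact identities displayed immediately before the corollary.

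Second, I would isolate $[M^{(n-3,2,1)}]$ and apply Krull--Schmidt: since $\Sym^4 D^{(n-1,1)}$ decomposes into indecomposable Young modules with nonnegative multiplicities, and the same is true of $M^{(n-3,2,1)}$, every Young module appearing with a negative coefficient on the right-hand side of the compact identity must be absorbed by the Young-module expansion of $M^{(n-3,2,1)}$. Hence, for $p>5$, the $p$-Kostka numbers $[M^{(n-3,2,1)}:Y^{(n-3,3)}]$ and $[M^{(n-3,2,1)}:Y^{(n-2,1^2)}]$ are each at least $1$; and for $p=5$ the additional $-[Y^{(n-2,2)}]$ term forces $[M^{(n-3,2,1)}:Y^{(n-2,2)}]\geq 1$ as well.

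There is no substantive obstacle: the argument is pure linear algebra in the representation ring once the two inputs are in hand. The only points requiring care are the case split at $p=5$ (dictated by \Cref{pkostka1}) and the verification that each of $(n-3,3)$, $(n-2,2)$ and $(n-2,1^2)$ dominates $(n-3,2,1)$, so that these Young modules are \emph{a priori} permitted as summands of $M^{(n-3,2,1)}$; all three dominances are routine to check.
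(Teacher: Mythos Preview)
Your proposal is correct and follows essentially the same approach as the paper: substitute the Young-module decompositions from \Cref{pkostka1} and \Cref{pkostka2} into the expression for $[\Sym^4 D^{(n-1,1)}]$ from \Cref{ex:SandD}, obtain the two compact identities (with the case split at $p=5$), and then use nonnegativity of Young-module multiplicities to force the stated $p$-Kostka numbers to be positive. The paper's proof is in fact even terser than yours, simply noting that the multiplicities on the right-hand side must be nonnegative; your remarks about Krull--Schmidt and the dominance checks are sound but not strictly required.
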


\begin{remark}
The $p$-Kostka numbers $[M^{(n-3,2,1)}:Y^{(n-3,3)}]$ and $[M^{(n-3,2,1)}:Y^{(n-2,1^2)}]$ can also be computed using a reduction formula. In \cite{FHKD}, it was proved that the $p$-Kostka numbers are preserved by the first row (respectively, first column) cut. Then in \cite{BOWMAN_GIANNELLI}, it was proved that there is a generalised row (respectively, column) cut formula.
Applying the first row cut, we have $$[M^{(n-3,2,1)}:Y^{(n-3,3)}]=[M^{(2,1)}:Y^{(3)}]=1.
$$
Applying the first column cut, we have $$[M^{(n-3,2,1)}:Y^{(n-2,1^2)}]=[M^{(n-4,1)}:Y^{(n-3)}]=1.
$$
\end{remark}

\subsection*{Acknowledgements} The authors thank Mark Wildon for helpful comments on an earlier version of the manuscript. The first author was supported by an LMS Early Career Fellowship at the University of Birmingham. The second author acknowledges support from EPSRC grant EP/X035328/1.

\appendix

\section{Proof of \Cref{le:Smap}}\label{se:proof}
\begin{proof}[Proof of \Cref{le:Smap}]
We first recall the following notations. Let $s_m$ denote the transposition $(m,m+1)\in \sym{n}$, $e_i=x_i-x_n$, where $x_1, x_2, \dots, x_n$ is the natural permutation basis of $M^{(n-1,1)}$, so $e_1, e_2, \ldots,e_{n-1}$ form a basis for $S^{(n-1,1)}$, and the map $\gamma:\Sym^3 S \to \Sym^2 S$ is given by 
\begin{align*}
    e_i^3&\mapsto -\frac{1}{2}e_i \sum_{l\leq n-1} e_l, \\
    e_i^2e_j&\mapsto \frac{1}{2}(e_ie_j-e_i^2-e_j^2) -\frac{1}{4} \sum_{l\leq n-1} e_l^2, \\
    e_ie_je_k&\mapsto -\frac{1}{4} (e_i^2 +e_j^2 +e_k^2 +\sum_{l\leq n-1} e_l^2)
\end{align*} 
    To show that $\gamma$ is a $K\sym n$-homomorphism, it suffices to show that for $s_m=(m, m+1)$ with $m=1,2,\ldots, n-1$ (which generate $\sym{n}$) and for pairwise distinct $1\leq i,j,k\leq n-1$
    \begin{align*}
        \gamma(s_m\cdot(e_i^3))&=s_m\cdot\gamma(e_i^3)\\
        \gamma(s_m\cdot(e_i^2e_j))&=s_m\cdot\gamma(e_i^2e_j)\\
        \gamma(s_m\cdot(e_ie_je_k))&=s_m\cdot\gamma(e_ie_je_k).
    \end{align*}
    When $1\leq m< n-1$, the identities obviously hold. We are left to check the identities for $m=n-1$. Let $\sigma=s_{n-1}$, so we can write $\sigma\cdot e_i = e_i - e_{n-1}$ for $1\leq i< n-1$ and $\sigma\cdot e_{n-1} =- e_{n-1}$. Note that for $1\leq i,j\leq n-1$, we have $\gamma(e_i^2e_j) = \gamma(e_ie_j^2).$ For $1\leq i <n-1$, we have
    \begin{align*}
        &\gamma(\sigma\cdot e_i^3)\\
        =\;&\gamma((\sigma \cdot e_i)^3)\\
        =\;&\gamma((e_i-e_{n-1})^3)\\
        =\;&\gamma(e_i^3-3e_i^2e_{n-1}+3e_{n-1}^2e_i-e_{n-1}^3)\\
        =\;&-\frac{1}{2}e_i\left(\sum_{l\leq n-1}e_l\right)+\frac{1}{2}e_{n-1}\left(\sum_{l\leq n-1}e_l\right).
    \end{align*}
    On the other hand, notice that $\sum_{l\leq n-1}e_l$ is invariant under the action of $\sym n$ and we have
    \begin{align*}
        &\sigma\cdot\gamma(e_i^3)\\
        =\;&\sigma \cdot \left(-\frac{1}{2}e_i\sum_{l\leq n-1}e_l\right)\\
        =\;&-\frac{1}{2}(e_i-e_{n-1})\left(\sum_{l\leq n-1}e_l\right)\\
        =\;&\gamma(\sigma\cdot(e_i^3)).
    \end{align*}
    In the case where $i=n-1$, we have
    \begin{align*}
        &\gamma(\sigma\cdot e_{n-1}^3)\\
        =\;&\gamma(-e_{n-1}^3)\\
        =\;&\frac{1}{2}e_{n-1}\sum_{l\leq n-1}e_l\\
        =\;&\sigma\cdot\gamma(e_{n-1}^3),
    \end{align*}
    establishing the first identity.
    For $1\leq i\neq j<n-1$, we have
    \begin{align*}
        &\gamma(\sigma\cdot e_i^2e_j)\\
        =\;&
        \gamma((e_i-e_{n-1})^2(e_j-e_{n-1}))\\
        =\;&\gamma(e_i^2e_j-2e_ie_je_{n-1}+e_{n-1}^2e_j-e_i^2e_{n-1}+2e_{n-1}^2e_i-e_{n-1}^3)\\
        =\;&\frac{1}{2}(e_ie_j+e_ie_{n-1}+e_je_{n-1}-2e_i^2-2e_j^2-2e_{n-1}^2)-\frac{3}{4}\sum_{l\leq n-1}e_l^2\\
        &+\frac{1}{2}e_{n-1}\sum_{l\leq n-1}e_l+\frac{1}{2}\left(e_i^2+e_j^2+e_{n-1}^2+\sum_{l\leq n-1}e_l^2\right)\\
        =\;&-\frac{1}{4}\sum_{l\leq n-1}e_l^2+\frac{1}{2}e_{n-1}\sum_{l\leq n-1}e_l-\frac{1}{2}(e_i^2+e_j^2+e_{n-1}^2)\\
        &+\frac{1}{2}(e_ie_j+e_ie_{n-1}+e_je_{n-1}),
    \end{align*}
    and, recalling that $p\mid n$, we also have
    \begin{align*}
        &\sigma\cdot\gamma(e_i^2e_j)\\
        =\;&\sigma\cdot \left(\frac{1}{2} (e_ie_j-e_i^2-e_j^2)-\frac{1}{4}\sum_{l\leq n-1}e_l^2\right)\\
        =\;&\frac{1}{2} ((e_i-e_{n-1})(e_j-e_{n-1})-(e_i-e_{n-1})^2-(e_j-e_{n-1})^2)-\frac{1}{4}\sum_{l< n-1}(e_l-e_{n-1})^2-\frac{1}{4}e_{n-1}^2\\
        =\;&-\frac{1}{4}\sum_{l\leq n-1}e_l^2+\frac{1}{2}e_{n-1}\sum_{l\leq n-1}e_l-\frac{1}{2}(e_i^2+e_j^2+e_{n-1}^2)\\
        &+\frac{1}{2}(e_ie_j+e_ie_{n-1}+e_je_{n-1})\\
        =\;&\gamma(\sigma\cdot e_i^2e_j).
    \end{align*}
If we have $1\leq j< i=n-1$, then
\begin{align*}
        &\gamma(\sigma\cdot e_{n-1}^2e_j)\\
        =\;&\gamma(e_{n-1}^2(e_j-e_{n-1}))\\
        =\;&\frac{1}{2}(e_je_{n-1}-e_j^2-e_{n-1}^2)-\frac{1}{4}\sum_{l\leq n-1}e_l^2+\frac{1}{2}e_{n-1}\sum_{l\leq n-1}e_l,
\end{align*}
and 
\begin{align*}
        &\sigma\cdot \gamma(e_{n-1}^2e_j)\\
        =\;&\sigma\cdot\left(\frac{1}{2}(e_je_{n-1}-e_j^2-e_{n-1}^2)-\frac{1}{4}\sum_{l\leq n-1}e_l^2\right)\\
        =\;&\frac{1}{2}(-(e_j-e_{n-1})e_{n-1}-(e_j-e_{n-1})^2-e_{n-1}^2)-\frac{1}{4}\sum_{l< n-1}(e_l-e_{n-1})^2-\frac{1}{4}e_{n-1}^2\\
        =\;&\frac{1}{2}(e_je_{n-1}-e_j^2-e_{n-1}^2)-\frac{1}{4}\sum_{l\leq n-1}e_l^2+\frac{1}{2}e_{n-1}\sum_{l\leq n-1}e_l\\
        =\;&\gamma(\sigma\cdot e_{n-1}^2e_j).
\end{align*}
If we have $1\leq i<j=n-1$, then
\begin{align*}
        &\gamma(\sigma\cdot e_i^2e_{n-1})\\
        =\;&\gamma(-(e_i-e_{n-1})^2e_{n-1})\\
        =\;&\gamma(-e_i^2e_{n-1}+2e_{n-1}^2e_i-e_{n-1}^3)\\
        =\;&\frac{1}{2}(e_ie_{n-1}-e_i^2-e_{n-1}^2)-\frac{1}{4}\sum_{l\leq n-1}e_l^2+\frac{1}{2}e_{n-1}\sum_{l\leq n-1}e_l\\
        =\;&\sigma\cdot \gamma(e_{n-1}^2e_i)\\
        =\;&\sigma\cdot \gamma(e_i^2e_{n-1}).
\end{align*}

We are left to show the last identity.
Suppose that $1\leq i <j<k<n-1$. We have
\begin{align*}
        &\gamma(\sigma\cdot e_ie_je_k)\\
        =\;&\gamma((e_i-e_{n-1})(e_j-e_{n-1})(e_k-e_{n-1}))\\
        =\;&\gamma(e_ie_je_k-e_ie_je_{n-1}-e_ie_ke_{n-1}-e_je_ke_{n-1}+(e_i+e_j+e_k)e_{n-1}^2-e_{n-1}^3)\\
        =\;&-\frac{1}{4}\left(e_i^2+e_j^2+e_k^2+\sum_{l\leq n-1}e_l^2\right)+\frac{1}{2}((e_i+e_j+e_k)e_{n-1}-e_i^3-e_j^3-e_k^3-3e_{n-1}^2)\\
        &-\frac{3}{4}\sum_{l\leq n-1}e_l^2+\frac{1}{4}\left(2e_i^2+2e_j^2+2e_k^2+3e_{n-1}^2+3\sum_{l\leq n-1}e_l^2\right)+\frac{1}{2}e_{n-1}\sum_{l\leq n-1}e_l\\
        =\;&-\frac{1}{4}\sum_{l\leq n-1}e_l^2+\frac{1}{2}e_{n-1}\sum_{l\leq n-1}e_l-\frac{1}{4}(e_i^2+e_j^2+e_k^2)-\frac{3}{4}e_{n-1}^2+\frac{1}{2}(e_i+e_j+e_k)e_{n-1},
\end{align*}
and we also have
\begin{align*}
        &\sigma\cdot\gamma(e_ie_je_k)\\
        =\;&\sigma\cdot\left(-\frac{1}{4}\left(e_i^2+e_j^2+e_k^2+\sum_{l\leq n-1}e_l^2\right)\right)\\
        =\;&-\frac{1}{4}\left((e_i-e_{n-1})^2+(e_j-e_{n-1})^2+(e_k-e_{n-1})^2+\sum_{l< n-1}(e_l-e_{n-1})^2+e_{n-1}^2\right)\\
        =\;&-\frac{1}{4}\sum_{l\leq n-1}e_l^2+\frac{1}{2}e_{n-1}\sum_{l\leq n-1}e_l-\frac{1}{4}(e_i^2+e_j^2+e_k^2)-\frac{3}{4}e_{n-1}^2+\frac{1}{2}(e_i+e_j+e_k)e_{n-1}\\
        =\;&\gamma(\sigma\cdot e_ie_je_k).
\end{align*}
Finally, suppose that $1\leq i<j<k=n-1$. We have
\begin{align*}
        &\gamma(\sigma\cdot e_ie_je_{n-1})\\
        =\;&\gamma(-(e_i-e_{n-1})(e_j-e_{n-1})e_{n-1})\\
        =\;&\gamma(-e_ie_je_{n-1}+(e_i+e_j)e_{n-1}^2-e_{n-1}^3)\\
        =\;&\frac{1}{4}\left(e_i^2+e_j^2+e_{n-1}^2+\sum_{l\leq n-1}e_l^2\right)+\frac{1}{2}((e_i+e_j)e_{n-1}-e_i^2-e_j^2-2e_{n-1}^2)\\
        &-\frac{1}{2}\sum_{l\leq n-1}e_l^2+\frac{1}{2}e_{n-1}\sum_{l\leq n-1}e_l\\
        =\;&-\frac{1}{4}\sum_{l\leq n-1}e_l^2+\frac{1}{2}e_{n-1}\sum_{l\leq n-1}e_l-\frac{1}{4}(e_i^2+e_j^2)-\frac{3}{4}e_{n-1}^2+\frac{1}{2}(e_i + e_j)e_{n-1},
\end{align*}
and we also have
\begin{align*}
        &\sigma\cdot\gamma(e_ie_je_{n-1})\\
        =\;&\sigma\cdot\left(-\frac{1}{4}\left(e_i^2+e_j^2+e_{n-1}^2+\sum_{l\leq n-1}e_l^2\right)\right)\\
        =\;&-\frac{1}{4}\left((e_i-e_{n-1})^2+(e_j-e_{n-1})^2+e_{n-1}^2+\sum_{l< n-1}(e_l-e_{n-1})^2+e_{n-1}^2\right)\\
        =\;&-\frac{1}{4}\sum_{l\leq n-1}e_l^2+\frac{1}{2}e_{n-1}\sum_{l\leq n-1}e_l-\frac{1}{4}(e_i^2+e_j^2)-\frac{3}{4}e_{n-1}^2+\frac{1}{2}(e_i + e_j)e_{n-1}\\
        =\;& \gamma(\sigma\cdot e_ie_je_{n-1}).
\end{align*}
Next, we prove that $\gamma\circ \mul_2=\id$ which will complete the proof.
We have for $1\leq i\leq n-1$,
\begin{align*}
    &\gamma\circ \mul_2(e_i^2)\\
    =\;&\gamma\left(e_i^2\sum_{l\leq n-1}e_l\right)\\
    =\;&\gamma\left(e_i^3+\sum_{l\neq i}e_i^2e_l\right)\\
    =\;&-\frac{1}{2}e_i\sum_{l\leq n-1}e_l+\sum_{l\neq i}\left(\frac{1}{2}(e_ie_l-e_i^2-e_l^2)-\frac{1}{4}\sum_{t\leq n-1}e_t^2\right)\\
    =\;&-\frac{1}{2}e_i^2 - \frac{n-2}{2}e_i^2 -\frac{1}{2}\sum_{l\leq n-1}e_l^2 + \frac{1}{2}e_i^2 -\frac{n-2}{4}\sum_{l\leq n-1}e_l^2 \\
    =\;&e_i^2.
\end{align*}
For $1\leq i<j\leq n-1$, we have
\begin{align*}
    &\gamma\circ \mul_2(e_ie_j)\\
    =\;&\gamma\left(e_i^2e_j+e_j^2e_i+\sum_{l\neq i,j}e_ie_je_l\right)\\
    =\;&(e_ie_j-e_i^2-e_j^2)-\frac{1}{2}\sum_{l\leq n-1}e_l^2-\frac{1}{4}\sum_{l\neq i,j}\left(e_i^2+e_j^2+e_l^2+\sum_{t\leq n-1}e_t^2\right)\\
    =\;&e_ie_j - (e_i^2 + e_j^2) -\frac{1}{2}\sum_{l\leq n-1}e_l^2 -\frac{n-3}{4}(e_i^2 + e_j^2)\\
    &-\frac{1}{4}\sum_{l\leq n-1}e_l^2 + \frac{1}{4}(e_i^2 + e_j^2) - \frac{n-3}{4}\sum_{l\leq n-1}e_l^2  \\
    =\;&e_ie_j. \qedhere
\end{align*}
\end{proof}

\bibliographystyle{alpha}
\bibliography{bibliography}
\end{document}